\newtheorem{theorem}{Theorem}
\newtheorem{remark}[theorem]{Remark}
\newtheorem{lemma}[theorem]{Lemma}
\newtheorem{proposition}[theorem]{Proposition}
\newtheorem{corollary}[theorem]{Corollary}
\DeclareMathOperator*{\divergenz}{div}              %
\DeclareMathOperator*{\ints}{int}         %
\DeclareMathOperator*{\loc}{loc}         %
\DeclareMathOperator*{\supp}{supp}         %
\DeclareMathOperator*{\essinf}{ess\,inf}         %
\DeclareMathOperator*{\esssup}{ess\,sup}         %
\def\N{\mathbb{N}}
\def\R{\mathbb{R}}
\def\C{\mathbb{C}}
\def\RN{\mathbb{R}^N}
\def\W1p{W^{1,p}(\Omega)}
\def\Wx1p{W^{1,p(\cdot)}(\Omega)}
\def\eps{\varepsilon}
\def\ph{\varphi}
\def\Om{\Omega}
\def\rand{\partial \Omega}
\def\into{\int_{\Omega}}
\def\C1{\ints (C^1(\overline{\Om})_+)}
\def\close{\overline{\Omega}}
\numberwithin{theorem}{section}
\numberwithin{equation}{section}
\def\cprime{$'$}
\title[Global a priori bounds to quasilinear parabolic equations]{Global a priori bounds for weak solutions to quasilinear parabolic equations with nonstandard growth}
\author[P. Winkert]{Patrick Winkert}
\address{Technische Universit\"{a}t Berlin, Institut f\"{u}r Mathematik,\\ Stra\ss e des 17.\,Juni 136, 10623 Berlin, Germany}
\email{winkert@math.tu-berlin.de}
\author[R. Zacher]{Rico Zacher}
\address{Universit\"at Ulm, Institut f\"ur Angewandte Analysis, Helmholtzstra\ss e 18, 89069 Ulm, Germany}
\email{rico.zacher@uni-ulm.de}
\subjclass[2010]{35K92, 35B45, 35K59}
\keywords{quasilinear parabolic equation, nonlinear boundary condition, nonstandard growth, $p(t,x)$-Laplacian, boundedness, De Giorgi's iteration technique}
\begin{document}
%\vspace*{-2cm} \begin{center}\bf \today\end{center} \vspace{2cm}

\begin{abstract}
In this paper we study a rather wide class of quasilinear parabolic problems with nonlinear boundary condition and
nonstandard growth terms. It includes the important case of equations with a $p(t,x)$-Laplacian. By means of the
localization method and De Giorgi's iteration technique we derive global a priori bounds for weak solutions of such problems.
Our results seem to be new even in the constant exponent case. 
\end{abstract}
\maketitle

%***********************************************************************************************************************************
%***********************************************************************************************************************************
\section{Introduction}%\label{Section_Introduction}
%***********************************************************************************************************************************
%***********************************************************************************************************************************
This paper is concerned with a rather wide class of quasilinear parabolic problems with nonlinear boundary condition.
An important feature of the problems under study is that they may contain nonlinear terms with variable
growth exponents depending on time and space. To be more precise, 
let $\Omega \subset \R^N, N>1,$ be a bounded domain with Lipschitz boundary $\Gamma:=\rand$ and let $T>0, Q_T= (0,T) \times \Omega$ and $\Gamma_T=(0,T) \times \Gamma $. Given $p \in C(\overline{Q}_T)$ satisfying $1 < p^-=\inf_{(t,x) \in \overline{Q}_T} p(t,x)$, the main purpose of the paper consists in proving global a priori bounds for weak solutions of parabolic equations of the form
\begin{equation}\label{problem}
    \begin{aligned}
	u_t - \divergenz \mathcal{A}(t,x,u,\nabla u)  & = \mathcal{B}(t,x,u,\nabla u) \quad && \text{in } Q_T,\\
	\mathcal{A}(t,x,u,\nabla u) \cdot \nu & = \mathcal{C}(t,x,u)  &&\text{on } \Gamma_T,\\
        u(0,x)& =u_0(x) && \text{in } \Omega.
    \end{aligned}
\end{equation}
Here $\nu(x)$ denotes the outer unit normal of $\Omega$ at $x \in \Gamma$, $u_0 \in L^2(\Omega)$ and the nonlinearities involved $\mathcal{A}: Q_T \times \R \times \RN \to \RN$, $\mathcal{B}: Q_T \times \R \times \RN \to \R$ and $\mathcal{C}: \Gamma_T \times \R \to \R$ are assumed to satisfy appropriate $p(t,x)$-structure conditions which are stated in hypothesis (H), see below. Our setting includes as a special case parabolic equations with a $p(t,x)$-Laplacian, which is given by 
\begin{align*}
    \Delta_{p(t,x)}u=\divergenz \left(\left|\nabla u \right|^{p(t,x)-2}\nabla u  \right),
\end{align*}
and which reduces to the $p(x)$-Laplacian if $p(t,x)=p(x)$, respectively, to the well-known $p$-Laplacian in case $p(t,x) \equiv p$.

Nonlinear equations of the type considered in \eqref{problem} with variable exponents in the structure conditions are usually termed equations with nonstandard growth. Such equations are of great interest and occur in the mathematical modelling of certain physical phenomena, for example in fluid dynamics (flows of electro-rheological fluids or fluids with temperature-dependent viscosity), in nonlinear viscoelasticity, in image processing and in processes of filtration through porous media, see for example, Antontsev-D{\'{\i}}az-Shmarev \cite{Antontsev-Diaz-Shmarev-2002}, Antontsev-Rodrigues \cite{Antontsev-Rodrigues-2006}, Chen-Levine-Rao \cite{Chen-Levine-Rao-2006}, Diening \cite{Diening-2002}, Rajagopal-R\r u\v zi\v cka \cite{Rajagopal-Ruzicka-2001}, R\r u\v zi\v cka \cite{Ruzicka-2000} and Zhikov \cite{Zhikov-1997}, \cite{Zhikov-1997-b} and the references therein.

Throughout the paper we impose the following conditions.
\begin{enumerate}[leftmargin=0.8cm]
    \item[(H)]
	The functions $\mathcal{A}: Q_T \times \R \times \RN \to \RN$, $\mathcal{B}: Q_T \times \R \times \RN \to \R$ and
	$\mathcal{C}: \Gamma_T \times \R \to \R$ are Carath\'eodory functions satisfying the subsequent structure conditions:
	\begin{align*}
	    \text{(H1) } & |\mathcal{A}(t,x,s,\xi)| \leq a_0|\xi|^{p(t,x)-1}+a_1|s|^{q_1(t,x)\frac{p(t,x)-1}{p(t,x)}}+a_2, && \text{ a.e. in } Q_T,\\
	    \text{(H2) } & \mathcal{A}(t,x,s,\xi) \cdot \xi \geq a_3|\xi|^{p(t,x)}-a_4|s|^{q_1(t,x)}-a_5, && \text{ a.e. in } Q_T,\\
	    \text{(H3) } & |\mathcal{B}(t,x,s,\xi)| \leq b_0|\xi|^{p(t,x)\frac{q_1(t,x)-1}{q_1(t,x)}}+b_1|s|^{q_1(t,x)-1}+b_2, &&\text{ a.e. in } Q_T,\\
	    \text{(H4) } & |\mathcal{C}(t,x,s)| \leq c_0|s|^{q_2(t,x)-1}+c_1, && \text{ a.e. in } \Gamma_T,
	\end{align*}
	for all $s \in \R$, all $\xi \in \RN$ and with positive constants $a_i, b_j, c_l$.
	Further, $p\in C(\overline{Q}_T)$ with $\inf_{(t,x) \in \overline{Q}_T} p(t,x)>1$ and $q_1 \in C(\overline{Q}_T)$ as well as $q_2\in C(\overline{\Gamma}_T)$ are chosen such that
	\begin{align*}
	    & p(t,x) \leq q_1(t,x)<p^*(t,x), \quad (t,x)\in \overline{Q}_T,\\
	    & p(t,x)\leq q_2(t,x)<p_*(t,x),\quad (t,x)\in \overline{\Gamma}_T,
	\end{align*}
	with the critical exponents
	\begin{align*}
	    p^*(t,x)= p(t,x)\frac{N+2}{N}, \qquad p_*(t,x)= p(t,x)\frac{N+2}{N}-\frac{2}{N}.
	\end{align*}
\end{enumerate}

\begin{enumerate}[leftmargin=0.8cm]
    \item[(P)]
        The exponent $p\in C(\overline{Q}_T)$ is log-H\"{o}lder continuous on $Q_T$, that is, there exists $k>0$ such that
        \begin{align*}
            |p(t,x)-p(t',x')| \leq \frac{k}{\log \left(e+\displaystyle\frac{1}{|t-t'|+|x-x'|} \right)},
        \end{align*}
        for all $(t,x), (t',x') \in Q_T$.
\end{enumerate}

A function $u: Q_T \to \R$ is called a {\bf weak solution} ({\bf subsolution, supersolution}) of problem (\ref{problem}) if
\begin{align*}
    u \in \mathcal{W}:=\left\{v\in C\left([0,T];L^2(\Omega)\right): |\nabla v| \in L^{p(\cdot,\cdot)}(Q_T) \right\}
\end{align*}
such that
\begin{align}\label{weak_solution}
    \begin{split}
        -&\into u_0 \ph dx\bigg |_{t=0} - \int_{0}^{T}\into u \ph_t dx dt+ \int_{0}^{T}\into \mathcal{A}(t,x,u,\nabla u) \cdot \nabla \ph dxdt\\
	& =\,(\le,\,\ge) \int_{0}^{T} \into \mathcal{B}(t,x,u,\nabla u) \ph dxdt  + \int_{0}^{T} \int_{\Gamma} \mathcal{C}(t,x,u) \ph d \sigma dt
    \end{split}
\end{align}
holds for all nonnegative test functions
\begin{align*}
    \ph \in \mathcal{V}:=\left\{\psi \in W^{1,2}\left([0,T];L^2(\Omega)\right): |\nabla \psi| \in L^{p(\cdot,\cdot)}(Q_T) \right\},
\end{align*}
with $\varphi \big |_{t=T}=0$, where $d \sigma$ denotes the $(N-1)$-dimensional surface measure.

Using the notation $y_+=\max(y,0)$, our main result reads as follows.

\begin{theorem}\label{maintheorem}
    Let the assumptions in (H) and (P) be satisfied. Then there exist
    positive constants $\alpha=\alpha(T)$, $\beta=\beta(p,q_1,q_2)$ and
    \begin{align*}
	C=C(p,q_1,q_2,a_3,a_4,a_5,b_0,b_1,b_2,c_0,c_1,N,\Omega,T)
    \end{align*}
   such that the following assertions hold.
   % with $C_1=\max\left(1,\esssup_{\Omega}u_0 \right)$ and $C_2=\max \left (1,-\essinf_{\Omega}u_0 \right )$.
    \begin{enumerate}[leftmargin=0.7cm]
        \item[(A)]
            If $u \in \mathcal{W}$ is a weak subsolution of (\ref{problem}) and if $u_0 \in L^2(\Omega)$ is essentially bounded above in $\Omega$, then both $\esssup_{(0,T) \times \Om} u$ and $\esssup_{(0,T) \times \Gamma} u$ are bounded
            from above by
            \begin{align*}%\label{subbound}
                &  2^{\alpha} \max\left(\esssup_{\Omega}u_0 , C \left [1+\int_0^T\!\into u_+^{q_1(t,x)} dxdt + \int_0^T\!\int_{\Gamma} u_+^{q_2(t,x)} d \sigma dt\right]^{\beta}\right).
            \end{align*}
        \item[(B)]
            If $u \in \mathcal{W}$ is a weak supersolution of (\ref{problem}) and if $u_0 \in L^2(\Omega)$ is essentially bounded below in $\Omega$, then both $\essinf_{(0,T) \times \Om} u$ and $\essinf_{(0,T) \times \Gamma} u$ are bounded from
            below by
            \begin{align*}%\label{superbound}
                & -2^\alpha \max\left(-\essinf_{\Omega}u_0, C
                \left [1+\int_0^T\!\!\into (-u)_+^{q_1(t,x)}  dxdt + \int_0^T\!\! \int_{\Gamma} (-u)_+^{q_2(t,x)}   d \sigma dt \right ]^\beta\right).
            \end{align*}
    \end{enumerate}
\end{theorem}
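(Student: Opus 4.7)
By replacing $u$ with $-u$ and the nonlinearities with their odd reflections $-\mathcal{A}(t,x,-s,-\xi)$, $-\mathcal{B}(t,x,-s,-\xi)$, $-\mathcal{C}(t,x,-s)$, one checks that the structure (H) is preserved, so it suffices to prove part~(A). The overall scheme is a De~Giorgi iteration adapted to the variable-exponent setting via a localization argument powered by (P). Because $p$ is not constant, the parabolic Sobolev embedding $L^\infty(L^2)\cap L^p(W^{1,p}) \hookrightarrow L^{p(N+2)/N}$ is not directly available. The remedy is to cover $\overline{Q}_T$ by finitely many parabolic cylinders and boundary patches $Q^{(i)}$ of radius $r$ so small that, by log-H\"older continuity, the oscillation $p^+_i - p^-_i$ is less than a prescribed fraction of the gaps $p^* - q_1$ and $p_* - q_2$. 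On each patch one then freezes $p$ at its infimum $p^-_i$ in the embeddings and compensates the mismatch through (H2) and the strict subcriticality.

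Fix such a (boundary) cylinder $Q$, set $k_0 := \esssup_\Om u_0$, and for $k \ge k_0$ test the subsolution inequality \eqref{weak_solution} with $\ph = \eta^{m}(u-k)_+$, where $\eta$ is a smooth spatial cutoff equal to $1$ on the inner half of $Q$, after a Steklov regularization in time that legitimizes the pairing with $u_t$. Integrating by parts in time, applying the coercivity (H2), the growth bounds (H3) and (H4), and Young's inequality to absorb gradient terms, one arrives at an energy inequality of the form
\begin{align*}
\esssup_{t}\into \eta^{m}(u-k)_+^2\,dx + \into \eta^{m}|\nabla(u-k)_+|^{p^-_i}\,dx\,dt \le C\,\mathcal{R}(k),
\end{align*}
where $\mathcal{R}(k) = \int_{\{u>k\}}(1 + u_+^{q_1(t,x)})\,dx\,dt + \int_{\Gamma\cap\{u>k\}}(1 + u_+^{q_2(t,x)})\,d\sigma\,dt$. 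The constant-exponent parabolic Sobolev embedding on $Q$, together with its boundary-trace counterpart, then controls $(u-k)_+$ in $L^{(p^-_i)^*}(Q)$ and $L^{(p^-_i)_*}(\Gamma\cap Q)$ by $\mathcal{R}(k)^{1/p^-_i}$.

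Let $k_n = k_0 + M(2 - 2^{-n})$ with $M>0$ to be chosen, and define
\begin{align*}
Y_n := \int_{\{u>k_n\}} u_+^{q_1(t,x)}\,dx\,dt + \int_{\Gamma\cap\{u>k_n\}} u_+^{q_2(t,x)}\,d\sigma\,dt.
\end{align*}
H\"older's inequality on the slab $\{k_n < u \le k_{n+1}\}$, together with the strict subcriticality $q_1<p^*$, $q_2<p_*$, converts the previous $L^{(p^-_i)^*}/L^{(p^-_i)_*}$ bounds into a geometric recursion
\begin{align*}
Y_{n+1} \le C\, b^{n}\, Y_n^{1+\gamma},
\end{align*}
with $C,b>1$ and $\gamma>0$ depending only on the listed parameters. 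The standard fast-decay lemma forces $Y_n \to 0$ provided $Y_0$ lies below an explicit threshold, which is arranged by choosing $M$ as an appropriate power of $Y_0 + \esssup_\Om u_0$. Unraveling gives $(u - k_0 - 2M)_+ \equiv 0$ on $Q$ and on $\Gamma \cap Q$; taking the maximum over the finite cover produces the claimed global bound, with the dependencies of $\alpha$, $\beta$, $C$ exactly as stated.

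The principal technical obstacle will be the energy step on a boundary patch: the trace term stemming from $\mathcal{C}$ has to be handled by a variable-exponent trace-Sobolev inequality that remains effective after freezing at $p^-_i$, and the terms $|s|^{q_1(t,x)-1}|\nabla u|$ appearing through (H3) must be split by Young's inequality so that the resulting gradient piece matches the exponent $p^-_i$ used in the coercivity (H2) on the same patch, or else absorption fails. Controlling the exponent mismatch across patches relies crucially on (P), which is exactly the quantitative hypothesis needed to keep the error from the oscillation of $p$ inside the subcritical gaps $p^*-q_1>0$ and $p_*-q_2>0$.
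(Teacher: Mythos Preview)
Your overall architecture---energy estimate, localization to freeze the exponent, De~Giorgi recursion, fast-decay lemma---matches the paper. However, two points deserve correction, and a third is a genuine methodological difference worth noting.

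\textbf{Time regularization.} You write ``after a Steklov regularization in time that legitimizes the pairing with $u_t$.'' This is exactly the step that fails when $p$ depends on $t$: for $v\in L^{p(\cdot,\cdot)}(Q_T)$ the Steklov average $v_h$ need not lie in (nor converge in) $L^{p(\cdot,\cdot)}$, because the time-shift changes the exponent. The paper devotes Section~3 to building replacement smoothing operators $\tau_h S_h E_h$ (temporal exponential convolution combined with spatial mollification and a controlled extension) and proves in Lemma~3.1 that these map $\mathcal{V}\to\tilde{\mathcal{V}}$; the log-H\"older condition (P) is used precisely here, to control the extension $E_h$ and the spatial mollifier against the variable exponent. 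Without this machinery your test function $\eta^m(u-k)_+$ cannot be rigorously inserted. Steklov averages suffice only when $p=p(x)$, as the paper notes explicitly.

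\textbf{Role of (P).} Consequently, your last paragraph mislocates (P): the paper states that (P) is needed \emph{only} for the time regularization, while mere continuity of $p,q_1,q_2$ suffices for the localization and the subsequent estimates. The ``oscillation control'' you describe comes for free from uniform continuity on $\overline{Q}_T$.

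\textbf{Localization and time-stepping.} The paper does \emph{not} insert a cutoff into the test function. It first tests globally with $(u-\kappa)_+$ to obtain a truncated energy estimate on all of $\Omega$ (Proposition~4.1), and only afterwards multiplies by a spatial partition of unity $\{\xi_j\}$ inside the integrals; this avoids the $\nabla\eta$ cross-terms you would have to absorb. Time is handled not by parabolic cylinders but by a separate decomposition $[0,T]=\bigcup_{i=1}^l[\delta(i-1),\delta i]$: one first runs the De~Giorgi iteration on $(0,\delta)$, then uses the resulting bound as the new ``initial'' upper bound and repeats on $(0,2\delta)$, and so on. Each step doubles the bound, producing the factor $2^l=2^\alpha$ with $\alpha=\alpha(T)$. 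Your ``take the maximum over the finite cover'' does not by itself explain this factor.
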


Note that the assumptions of Theorem \ref{maintheorem} imply that the bounds given in Part (A) and (B) are finite.
In fact, for $u\in \mathcal{W}$  the finiteness of the integral terms in (A) and (B) can be seen by means of localization
($p$ is continuous) and the parabolic embeddings from Proposition \ref{energy_estimate} below.

Since a weak solution of (\ref{problem}) is both, a weak subsolution and a weak supersolution of (\ref{problem}), an important consequence of Theorem \ref{maintheorem} is stated in the following corollary.

\begin{corollary}\label{corollary}
    Let the assumptions (H) and (P) be satisfied and let $u_0 \in L^\infty(\Omega)$. Then, every weak solution $u \in \mathcal{W}$ of (\ref{problem}) is essentially bounded both in $(0,T) \times \Omega$ and on $(0,T) \times \Gamma$
    (the latter w.r.t.\,the surface measure on $\Gamma$),  and the estimates in (A) and (B) from Theorem \ref{maintheorem} give a lower and an upper bound of $u$ on $(0,T) \times \Omega$ and $(0,T) \times \Gamma$, respectively.
\end{corollary}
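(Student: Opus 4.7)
The proof is an essentially immediate consequence of Theorem \ref{maintheorem}, combined with the observation about finiteness of the integrals already mentioned in the remark following the theorem. My plan has three short steps.

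First, I would argue that any weak solution $u\in\mathcal{W}$ in the sense of \eqref{weak_solution} (i.e.\ with the middle sign ``$=$'') automatically qualifies as both a weak subsolution and a weak supersolution: the ``$=$'' in \eqref{weak_solution} trivially implies both ``$\le$'' and ``$\ge$'' for every admissible nonnegative test function $\varphi\in\mathcal{V}$ with $\varphi|_{t=T}=0$. The hypothesis $u_0\in L^\infty(\Omega)$ guarantees that $\esssup_\Omega u_0$ and $\essinf_\Omega u_0$ are real numbers, so $u_0$ is essentially bounded above and below in $\Omega$.

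Second, I would apply Theorem \ref{maintheorem}(A) to $u$ (viewed as a subsolution) to obtain the stated upper bound for both $\esssup_{(0,T)\times\Omega} u$ and $\esssup_{(0,T)\times\Gamma} u$, and then apply Theorem \ref{maintheorem}(B) to $u$ (viewed as a supersolution) to obtain the corresponding lower bound for $\essinf_{(0,T)\times\Omega} u$ and $\essinf_{(0,T)\times\Gamma} u$. Together these provide the two-sided essential bound claimed in the corollary.

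Third, I would justify that the right-hand sides of the bounds in (A) and (B) are actually finite for $u\in\mathcal{W}$, so that ``essentially bounded'' is a genuine conclusion rather than a vacuous statement. Here I would follow the hint in the remark after Theorem \ref{maintheorem}: since $p\in C(\overline{Q}_T)$ is uniformly continuous, one can cover $\overline{Q}_T$ by finitely many cylinders on each of which $p$ oscillates by less than a prescribed $\eta>0$; choosing $\eta$ small enough that $q_1$ stays strictly below $p^*$ and $q_2$ strictly below $p_*$ on each piece (with a uniform margin), one reduces the estimates to local ones with \emph{constant} majorizing exponents. On each such piece the parabolic embedding provided by Proposition \ref{energy_estimate}, applied to $u\in\mathcal{W}\subset L^\infty(0,T;L^2(\Omega))$ with $|\nabla u|\in L^{p(\cdot,\cdot)}(Q_T)$, gives $u\in L^{p^*_{\rm loc}}$ on the interior piece and the corresponding boundary integrability on the surface piece. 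Summing the finitely many local contributions yields the finiteness of the four integrals appearing in (A) and (B), and hence of the bounds themselves.

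The ``main obstacle'' is really just this last bookkeeping step; the deductive core of the corollary is a one-line application of Theorem \ref{maintheorem}, so no additional analytic machinery beyond the localization/embedding argument already advertised in the paper is needed.
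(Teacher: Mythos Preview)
Your proposal is correct and follows exactly the paper's own reasoning: the paper does not give a separate proof of the corollary but simply notes (just before stating it) that a weak solution is both a weak subsolution and a weak supersolution, and (just after Theorem \ref{maintheorem}) that the integral bounds are finite for $u\in\mathcal{W}$ by localization and Proposition \ref{energy_estimate}. Your write-up is a faithful and slightly more detailed version of precisely this argument.
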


In case that $p$ does not depend on $t$, the following result is valid.

\begin{theorem}
    If the exponent $p$ is independent of $t$, then the statements in Theorem \ref{maintheorem} and Corollary \ref{corollary} remain true without assuming condition (P).
\end{theorem}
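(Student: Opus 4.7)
The plan is to revisit the proof of Theorem \ref{maintheorem} and isolate the precise role of hypothesis (P). In that proof, the log-H\"{o}lder continuity of $p$ is used to quantify, within a localization scheme, the oscillation of $p$ on small parabolic cylinders $Q_r=(t_0-r^2,t_0)\times B_r(x_0)$. Concretely, it enters through bounds of the form
\[
|Q_r|^{p^+_{Q_r}-p^-_{Q_r}} \leq C,
\]
needed when passing from the variable-exponent modular $\int_{Q_r}|v|^{p(t,x)}\,dxdt$ to integrals with a constant exponent (either $p^-_{Q_r}$ or $p^+_{Q_r}$) in the energy and iteration arguments.

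If $p$ depends on $x$ only, the oscillation of $p$ over any space-time cylinder coincides with its oscillation over the spatial ball, so the time length of the cylinder becomes irrelevant. I would therefore replace the parabolic cylinders $Q_r$ used in the original argument by sets of the form $(0,T)\times B_r(x_0)$. Since $p\in C(\overline{\Omega})$ is uniformly continuous on the compact set $\overline{\Omega}$, for every $\varepsilon>0$ there exists $r_0>0$ (independent of $x_0$) such that $\operatorname{osc}_{B_{r_0}(x_0)}p<\varepsilon$. A finite covering of $\overline{\Omega}$ by balls of radius $r_0$ then yields the bound $|B_{r_0}|^{\operatorname{osc}p}\leq C(r_0,\varepsilon)$ without any log-H\"{o}lder input.

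With this replacement in hand, I would rerun the De Giorgi iteration of Theorem \ref{maintheorem} within each patch of a fixed finite cover of $\overline{\Omega}$, using spatial cutoffs only. The parabolic embedding of Proposition \ref{energy_estimate} retains the same constants (which depend on $p^{\pm}$, $N$, $T$ and the domain, not on the log-H\"{o}lder constant), so the recursion producing the $L^{\infty}$ bound proceeds verbatim, with the oscillation errors $|B_{r_0}|^{\operatorname{osc}p}$ absorbed into $C$. Summing the resulting local $L^{\infty}$ bounds over the finite cover gives the global estimates in (A) and (B) of Theorem \ref{maintheorem}; the boundary contribution on $\Gamma_T$ is handled identically, since (P) constrains only the interior exponent $p$. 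Corollary \ref{corollary} then follows exactly as before by applying the theorem to both $u$ and $-u$.

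The main obstacle is a bookkeeping one: verifying that \emph{every} invocation of (P) in the original proof — in particular in the norm-modular comparisons and in the determination of the recursion constants — can be replaced by the uniform continuity argument sketched above, so that none of the final constants $\alpha$, $\beta$, $C$ depend on the log-H\"{o}lder constant $k$. A secondary point is to confirm that the test function construction remains admissible when the cutoff acts only in the spatial variable, which is immediate since the time-independence of $p$ removes any coupling between temporal and spatial scalings.
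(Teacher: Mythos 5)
Your proposal rests on a misreading of where hypothesis (P) actually enters the argument. You identify (P) as controlling the oscillation of $p$ on small parabolic cylinders in the localization/modular-norm comparison, and you propose to replace it with uniform continuity of $p$ in the spatial variable. But the paper is explicit (both in the introduction and at the end of Section~\ref{Section_smoothing_operators}) that the De Giorgi iteration and the modular estimates in Section~\ref{Section_Energy_estimates} never use (P) at all — mere continuity of $p$ on $\overline{Q}_T$ already yields the uniform control needed in the partition-of-unity and cylinder decomposition (a fixed finite cover with $R,\delta$ small enough, since $p$ is uniformly continuous on a compact set). So the "bookkeeping" you describe is vacuous: there is nothing there to relax.

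The actual role of (P) is in the \emph{time regularization} of the weak formulation in Section~\ref{Section_smoothing_operators}. Because a weak solution $u\in\mathcal{W}$ need not have $u_t\in L^2$, one must regularize \eqref{weak_solution} before plugging in truncations as test functions. The paper does this with the operators $\tau_h S_h E_h$ and $\tau_h^* S_h E_h$, and the construction of the bounded extension operator $E_h$ with the required mapping properties (Diening--Harjulehto--H\"ast\"o--R\r u\v zi\v cka, Theorem~8.5.12) crucially uses the log-H\"older continuity of $\tilde p$ in \emph{both} variables. Your proposal does not touch this step, so it does not actually dispose of (P). The correct argument — and the one the paper uses — is that when $p=p(x)$ one can bypass the smoothing operators entirely and regularize in time via Steklov averages $v_h(t,x)=\frac{1}{h}\int_t^{t+h}v(s,x)\,ds$, for which convergence $v_h\to v$ in $L^{p(\cdot)}(Q_{T-\delta})$ holds under the mere assumption $p(x)\ge 1$ (Alkhutov--Zhikov; Zhikov--Pastukhova), with no log-H\"older condition. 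Once that replacement is made, Sections~\ref{Section_Energy_estimates} goes through verbatim and the theorem follows. Your secondary remark about the test function being admissible when the cutoff acts only in space gestures in this direction, but never identifies the Steklov-averaging mechanism that actually makes it work.
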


The first novelty of our paper is the fact that we present a priori bounds for very general parabolic equations with nonlinear boundary condition and involving nonlinearities that fulfill nonstandard growth conditions with a variable exponent function $p$ depending on time and space. In order to prove such bounds we obtain several results of independent interest. Indeed, although we were looking intensively in the literature, we could not find a version of the Gagliardo-Nirenberg inequality proved in Theorem \ref{multiplicative_inequalities}(2), which we needed to get the parabolic embedding stated in Proposition \ref{energy_estimate} with the critical exponent
\begin{align*}
    p^*=p\frac{N+2}{N}-\frac{2}{N}, \quad p>1.
\end{align*}
From the proof of Proposition \ref{energy_estimate} we directly deduce that $p^*$ is indeed optimal. It seems that such a critical exponent for parabolic boundary estimates is not known so far even in the constant exponent case. 

Another novelty of this work is a modified technique in order to obtain a suitable time regularization corresponding to \eqref{problem}. This leads to a new equivalent weak formulation based on so-called smoothing operators, which replace the well-known Steklov averages in the constant exponent case. Note that in our approach the log-H\"{o}lder continuity (P) is only
required for the time regularization. It is not needed for the estimates that are derived from the basic truncated energy estimates in Section \ref{Section_Energy_estimates}, here
continuity of $p$ is sufficient. In the case that $p$ does not depend on $t$ we can drop the log-H\"{o}lder continuity condition. Here one can use the well-known Steklov averaging technique, and it is sufficient to merely assume continuity of the function $p$. The present work can be seen as a nontrivial generalization of the elliptic case studied by the authors in \cite{Winkert-Zacher-2012}, \cite{Winkert-Zacher-2015} to the parabolic one.

As mentioned in the beginning, in recent years there has been a growing interest in the study of elliptic and parabolic problems involving nonlinearities that have nonstandard growth. 
Local boundedness and interior H\"older continuity of weak solutions to parabolic equations of the form
\begin{align}\label{parabolic_system}
    u_t-\divergenz \left(\left| \nabla u\right|^{p(t,x)-2}\nabla u \right)=0
\end{align}
have been proved by Xu-Chen \cite[Theorems 2.2 and 2.3]{Xu-Chen-2006}, where $p: [0,T)\times \Omega  \to \R$ is a measurable function satisfying
\begin{align}\label{log_chen}
    1 <p_1\leq p(t,x)\leq p_2<\infty, \ \left| p(t,x)-p(s,y)\right| \leq \frac{C_1}{\log\left(|x-y|+C_2|t-s|^{p_2}\right)^{-1}}
\end{align}
for any$(t,x), (s,y) \in [0,T)\times \Omega$ such that $|x-y|<\frac{1}{2}$ and $|t-s|<\frac{1}{2}$ with positive constants $p_1, p_2, C_1, C_2$. The idea in the proof is to apply a modified version of Moser's iteration. Note that the second inequality in \eqref{log_chen} is different from ours stated in (P). B\"ogelein-Duzaar \cite{Bogelein-Duzaar-2012} established local H\"older continuity of the spatial gradient of weak solutions to the parabolic system
\begin{align*}
    u_t-\divergenz \left(a(t,x)\left| \nabla u\right|^{p(t,x)-2}\nabla u \right)=0,
\end{align*}
in the sense that $\nabla u \in C^{0; \frac{\alpha}{2}, \alpha}_{\loc}$ for some $\alpha \in (0,1]$ provided the functions $p$ and $a$ satisfy a H\"older continuity property. An extension of this result to systems with nonhomogenous right-hand sides
of the form
\begin{align}\label{parabolic_system_2}
    u_t-\divergenz \left(a(t,x)\left| \nabla u\right|^{p(t,x)-2}\nabla u \right)=\divergenz \left(|F|^{p(t,x)-2}F\right),
\end{align}
could be achieved by Yao \cite{Yao-2015} (see also Yao \cite{Yao-2014}). Baroni-B\"ogelein \cite{Baroni-Bogelein-2014} have shown that the spatial gradient $\nabla u$ of the solution to \eqref{parabolic_system_2} is as integrable as the right-hand side $F$, that is
\begin{align*}
    |F|^{p(\cdot)} \in L^q_{\loc} \ \Longrightarrow \ |\nabla u|^{p(\cdot)} \in L^q_{\loc} \ \text{ for any }q>1.
\end{align*}
We also mention a similar result of B\"ogelein-Li \cite{Bogelein-Li-2014} concerning higher integrability for very weak solutions to certain degenerate parabolic systems. Partial regularity for parabolic systems like \eqref{parabolic_system} has been obtained by Duzaar-Habermann in \cite{Duzaar-Habermann-2012}.

Global and local in time $L^\infty$-bounds for weak solutions in suitable Orlicz-Sobolev spaces to the following anisotropic parabolic equations 
\begin{align*}
    \begin{cases}
	u_t-\displaystyle\sum_i D_i \left[ a_i(z,u)\left|D_iu\right|^{p_i(z)-2}D_iu+b_i(z,u)\right]+d(z,u)=0 \ \  \text{in }(0,T]\times \Omega,\\
	u=0 \ \ \text{on }\Gamma_T, \ u(0,x)=u_0(x) \text{ in }\Omega,
    \end{cases}
\end{align*}
with $z=(t,x)\in (0,T]\times \Omega$ 
%and $ \Gamma_T$ being the lateral boundary of the cylinder $\Omega \times (0,T]$,
 has been derived by Antontsev-Shmarev \cite{Antontsev-Shmarev-2009}. Concerning existence results to certain problems involving nonlinearity terms with $p(t,x)$-structure conditions we refer to the papers of Alkhutov-Zhikov \cite{Alkhutov-Zhikov-2010-b}, Antontsev \cite{Antontsev-2011}, Antontsev-Chipot-Shmarev \cite{Antontsev-Chipot-Shmarev-2013}, Antontsev-Shmarev \cite{Antontsev-Shmarev-2006}, \cite{Antontsev-Shmarev-2012}, \cite{Antontsev-Shmarev-2011}, \cite{Antontsev-Shmarev-2010-b}, Bauzet-Vallet-Wittbold-Zimmermann \cite{Bauzet-Vallet-Wittbold-Zimmermann-2013}, Guo-Gao \cite{Guo-Wenjie-2011}, Zhikov \cite{Zhikov-2011} and the references therein. We also mention the recent monograph of Antontsev-Shmarev \cite{Antontsev-Shmarev-2015} about several results to evolution partial differential equations with nonstandard growth conditions.

In the stationary case with $p=p(x)$ merely continuous, the authors of this manuscript established global a priori bounds for weak solutions to equations of the form
\begin{align}\label{elliptic_case}
    -\divergenz \mathcal{A} (x,u,\nabla u)  = \mathcal{B}(x,u,\nabla u)  \ \text{ in } \Om, \qquad \mathcal{A} (x,u,\nabla u)\cdot \nu  = \mathcal{C}(x,u)  \ \text{ on } \Gamma,
\end{align}
involving nonlinearities with suitable $p(x)$-structure conditions via De Giorgi iteration combined with localization, see \cite{Winkert-Zacher-2012}, \cite{Winkert-Zacher-2015}. Local boundedness of solutions to the
equation
\begin{align*}
    -\divergenz \mathcal{A} (x,u,\nabla u) & = \mathcal{B}(x,u,\nabla u)  \quad \text{ in } \Om,
\end{align*}
has been studied by Fan-Zhao \cite{Fan-Zhao-1999} and Gasi{\'n}ski-Papageorgiou (see \cite[Proposition 3.1]{Gasinski-Papageorgiou-2011}) proved global a priori bounds for weak solutions to the equation
\begin{align*}
    -\Delta_{p(x)} u  = g(x,u) \ \text{ in } \Om, \qquad  \frac{\partial u}{\partial \nu} = 0  \ \text{ on } \Gamma,
\end{align*}
where the Carath\'{e}odory function $g: \Om \times \R \to \R$ satisfies a subcritical growth condition and $p \in C^1(\overline{\Omega})$ with $1<\min_{x \in \overline{\Omega}}p(x)$. We also mention the works of You \cite{You-1998} ($C^\alpha$-regularity) and Skrypnik \cite{Skrypnik-2011} (regularity near a nonsmooth boundary) concerning parabolic equations with nonstandard growth. Existence results for $p(x)$-structure equations from different angles ($L^1$-data, blow up, anisotropic) can be found, for example in the papers of Antontsev-Shmarev \cite{Antontsev-Shmarev-2010}, Bendahmane-Wittbold-Zimmermann \cite{Bendahmane-Wittbold-Zimmermann-2010} and Pinasco \cite{Pinasco-2009}, see also the references therein.

Finally, $L^{\infty}$-estimates for solutions of \eqref{elliptic_case} in case $p(x)\equiv p$ with $q_1(x)=q_2(x)\equiv p$ have been established by the first author in \cite{Winkert-2010},\cite{Winkert-2010-b} following Moser's iteration technique (for constant $p$ see also Pucci-Servadei \cite{Pucci-Servadei-2008}).

The paper is organized as follows. Section \ref{Section_Preliminaries} collects some basic properties of the corresponding function spaces, states new interpolation inequalities and provides certain parabolic embedding results, which will be used in later considerations. In Section \ref{Section_smoothing_operators} we introduce associated smoothing operators to derive a regularized weak formulation of \eqref{problem}. Based on this, in Section \ref{Section_Energy_estimates} we prove truncated energy estimates and give the complete proof of Theorem \ref{maintheorem} by applying De Giorgi iteration along with localization.

%***********************************************************************************************************************************
%***********************************************************************************************************************************
\section{Preliminaries and hypotheses}\label{Section_Preliminaries}
%***********************************************************************************************************************************
%***********************************************************************************************************************************
Let $\Omega \subset \R^N$ be a bounded domain, $T>0$ and $Q_T=(0,T)\times \Omega$. For $p \in C(\overline{Q}_T)$ we denote by $L^{p(\cdot,\cdot)}(Q_T)$ the variable exponent Lebesgue space which is defined by
\begin{align*}
    L^{p(\cdot,\cdot)}(Q_T) = \left \{u ~ \Big | ~ u: Q_T \to \R \text{ is measurable and } \int_{Q_T} |u|^{p(t,x)}dx dt< +\infty \right \}
\end{align*}
equipped with the Luxemburg norm
\begin{align*}
    \|u\|_{L^{p(\cdot,\cdot)}(Q_T)} = \inf \left \{ \tau >0 :  \int_{Q_T} \left |\frac{u(t,x)}{\tau} \right |^{p(t,x)}dx dt \leq 1  \right \}.
\end{align*}
It is well known that $L^{p(\cdot,\cdot)}(Q_T)$ is a reflexive Banach space provided that $p^-:=\min_{\overline{Q}_T}p>1$. For more information and basic properties on variable exponent spaces we refer the reader to the papers of Fan-Zhao \cite{Fan-Zhao-2001}, Kov{\'a}{\v{c}}ik-R{\'a}kosn{\'{\i}}k \cite{Kovacik-Rakosnik-1991} and the monograph of 
Diening-Harjulehto-H{\"a}st{\"o}-R\r u\v zi\v cka\cite{Diening-Harjulehto-Hasto-Ruzicka-2011}.

The next result concerns the Gagliardo-Nirenberg multiplicative embedding inequality. First we state the following proposition on a version of a fractional Gagliardo-Nirenberg inequality (see Hajaiej-Molinet-Ozawa-Wang \cite[Proposition 4.2]{Hajaiej-Molinet-Ozawa-Wang-2011}).
\begin{proposition}\label{GN_fractional}
    Let $1<\hat{p},p_0,p_1<\infty, s,\hat{s}_1 \ge 0, 0 \leq \theta \leq 1$ and denote by $H^{s}_{\hat{p}}(\R^N):=(I-\Delta)^{-\frac{s}{2}}L^{\hat{p}}(\R^N)$ the Bessel potential space. Then there exists a positive constant $\tilde{C}$ such that the inequality
    \begin{align*}
	\|u\|_{H^{s}_{\hat{p}}(\R^N)} \leq \tilde{C}\|u\|^{\theta}_{H^{\hat{s}_1}_{p_1}(\R^N)}\|u\|^{1-\theta}_{L^{p_0}(\R^N)}
    \end{align*}
    holds if
    \begin{align*}
	\frac{N}{\hat{p}}-s=\theta \left( \frac{N}{p_1}-\hat{s}_1 \right)+(1-\theta) \frac{N}{p_0}, \quad \text{and} \quad  s \leq \theta \hat{s}_1.
    \end{align*}
\end{proposition}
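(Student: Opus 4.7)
The plan is to adapt the classical Littlewood--Paley proof of the Gagliardo--Nirenberg inequality to the Bessel potential setting. Let $\{\psi_j\}_{j \geq -1}$ be a standard inhomogeneous dyadic partition of unity on $\R^N$ with $\supp \psi_j \subset \{|\xi| \sim 2^j\}$ for $j \geq 0$, and let $\Delta_j$ denote the associated Littlewood--Paley projectors. I would start from the square-function equivalence $\|u\|_{H^s_{\hat p}} \sim \|(\sum_j 4^{js}|\Delta_j u|^2)^{1/2}\|_{L^{\hat p}}$, valid for $s \in \R$ and $1 < \hat p < \infty$, together with Bernstein's inequality $\|\Delta_j u\|_{L^q} \lesssim 2^{jN(1/p - 1/q)}\|\Delta_j u\|_{L^p}$ for $p \leq q$ and its weighted analogue $\|\Delta_j u\|_{H^\sigma_q} \sim 2^{j\sigma}\|\Delta_j u\|_{L^q}$.

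Next, I would fix a frequency threshold $J \in \Z$ to be chosen later and decompose $u = u_L + u_H$ with $u_L = \sum_{j < J}\Delta_j u$ and $u_H = \sum_{j \geq J}\Delta_j u$. On the low-frequency side I would control each $\|\Delta_j u\|_{L^{\hat p}}$ from $\|\Delta_j u\|_{L^{p_0}}$ via Bernstein, gaining a factor of the form $2^{jN(1/p_0 - 1/\hat p)}$ (handling the reverse order of exponents by H\"older on the compact frequency support); on the high-frequency side I would rewrite $\|\Delta_j u\|_{L^{p_1}} \sim 2^{-j\hat s_1}\|\Delta_j u\|_{H^{\hat s_1}_{p_1}}$ and again pass to $L^{\hat p}$ by Bernstein. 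The scaling identity in the hypothesis is precisely what aligns the $j$-exponents coming from the two estimates, and summing the resulting geometric series should give bounds of the form
\begin{equation*}
\|u_L\|_{H^s_{\hat p}} \lesssim 2^{J\alpha}\|u\|_{L^{p_0}}, \qquad \|u_H\|_{H^s_{\hat p}} \lesssim 2^{-J\beta}\|u\|_{H^{\hat s_1}_{p_1}},
\end{equation*}
with positive exponents $\alpha, \beta$ satisfying $\alpha/(\alpha + \beta) = 1 - \theta$.

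Optimizing by choosing $2^J \sim (\|u\|_{H^{\hat s_1}_{p_1}}/\|u\|_{L^{p_0}})^{1/(\alpha+\beta)}$ would then balance the two contributions and produce exactly the $\theta$ and $1-\theta$ powers in the statement. The side condition $s \leq \theta \hat s_1$ is precisely what guarantees $\beta \geq 0$, so that the high-frequency geometric series actually converges and the inequality is nontrivial. The main obstacle I expect is the bookkeeping around the square-function characterization: since $\|\cdot\|_{H^s_{\hat p}}$ is an $\ell^2$-in-$j$ norm while the Bernstein bounds act pointwise in $j$, one needs either a vector-valued Bernstein estimate or to interchange $L^{\hat p}$ and $\ell^2$ via Minkowski, both of which require $\hat p \in (1,\infty)$ to work cleanly. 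Endpoint cases $\theta \in \{0,1\}$ and the separate treatment of the inhomogeneous low-frequency block $\Delta_{-1}$ are standard but will need a separate word; of course the whole argument can also be shortcut by appealing directly to the cited result of Hajaiej--Molinet--Ozawa--Wang.
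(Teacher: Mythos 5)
The paper itself does not actually prove this proposition: it simply invokes Hajaiej--Molinet--Ozawa--Wang \cite[Proposition 4.2]{Hajaiej-Molinet-Ozawa-Wang-2011} and moves on. Your proposal therefore takes a genuinely different route, namely a from-scratch Littlewood--Paley proof of the fractional Gagliardo--Nirenberg inequality. The overall plan (square-function characterization of $H^s_{\hat p}$, dyadic low/high split at frequency $2^J$, Bernstein in each regime, optimization over $J$) is the standard way to prove such multiplicative estimates and is in the right direction. You also correctly identify that the scaling identity aligns the $j$-exponents and that $s\le\theta\hat s_1$ is what keeps the high-frequency contribution summable. Two small corrections to the bookkeeping: the scaling identity gives $\alpha/(\alpha+\beta)=\theta$, not $1-\theta$ (writing $\alpha=s+N/p_0-N/\hat p$ and $\beta=\hat s_1-s-N/p_1+N/\hat p$, the identity is exactly $\alpha=\theta(\alpha+\beta)$), though this does not change the outcome of the optimization; and the endpoint $s=\theta\hat s_1$ gives $\beta=0$, where the $\ell^1$-in-$j$ summation you describe fails and one genuinely needs the $\ell^2$ square-function structure rather than a naive triangle inequality.

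The one step I would not accept as written is the low-frequency passage from $L^{p_0}$ to $L^{\hat p}$ when $\hat p<p_0$, which the hypotheses do allow (the scaling identity together with $s\le\theta\hat s_1$ forces $\hat p\ge\min(p_0,p_1)$, but $\hat p<p_0$ can occur when $p_1<p_0$). Your parenthetical fix, ``H\"older on the compact frequency support,'' does not work: $\Delta_j u$ is compactly supported only on the Fourier side, not in physical space, so there is no finite-measure set on which to apply H\"older, and Bernstein only raises the Lebesgue exponent, never lowers it. To handle that regime one must instead estimate the low-frequency block against $L^{p_1}$ (where Bernstein does go in the right direction) and then interpolate, or argue by duality---either way a nontrivial additional case. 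So the sketch has a genuine gap there. Given this, the economical choice made in the paper---cite \cite{Hajaiej-Molinet-Ozawa-Wang-2011} directly, as you yourself suggest at the end---is also the safest one.
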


\begin{remark} \label{GN_Omega}
{\em Let $\Omega\subset \R^N$ be a bounded domain with Lipschitz boundary. Then the statement of Proposition \ref{GN_fractional} remains true when replacing $\R^N$ by $\Omega$ and restricting $ s,\hat{s}_1$ to 
the interval $[0,1]$. This follows from Proposition \ref{GN_fractional} by means of extension (from $\Omega$ to
the whole space $\R^N$) and restriction. Recall that for any bounded Lipschitz domain $\Omega$ there exists a bounded 
linear extension operator from $H^1_p(\Omega)$ to $H^1_p(\R^N)$ (see e.g.\,Adams \cite{Adams-1975}) and that this property
carries over to the case of Bessel potential spaces $H^s_p$ with $s\in [0,1]$, by interpolation.}
\end{remark}

With the help of Proposition \ref{GN_fractional} and Remark \ref{GN_Omega} we can now obtain the subsequent two
interpolation (and trace) inequalities. The first one is well known, whereas we could not find any source for the second 
inequality, which is of vital importance with regard to sharp boundary estimates.  

\begin{theorem}\label{multiplicative_inequalities}
    Let $\Omega \subset \R^N$, $N> 1$, be a bounded domain with Lipschitz boundary $\Gamma:=\partial \Omega$ and let $u \in \W1p$ with $1< p<\infty$.
    \begin{enumerate}
      \item
	  For every fixed $s_1\in (1,\infty)$ there exists a constant $C_\Omega>0$ depending only upon $N, p$ and $s_1$  such that
	  \begin{align*}%\label{multiplicative}
	      \|u\|_{L^{q_1}(\Omega)} \leq C_\Omega \|u\|_{\W1p}^{\alpha_1} \|u\|_{L^{s_1}(\Omega)}^{1-\alpha_1},
	  \end{align*}
	  where $\alpha_1 \in [0,1]$ and $q_1\in (1,\infty)$ are linked by
	  \begin{align*}%\label{alpha}
	      & \frac{N}{q_1}=\alpha_1 \left( \frac{N}{p}-1 \right)+(1-\alpha_1) \frac{N}{s_1}.
	  \end{align*}
	  %Their admissible range is
	  %\begin{enumerate}
	    %  \item[(a)]
		%  if $p \geq N >1$, then $q_1 \in [s_1,\infty)$ and $\alpha_1 \in \left[ 0, \frac{Np}{Np+s_1(p-N)} \right )$;
	      %\item[(b)]
		%  if $p<N$ and $s_1 \leq \frac{Np}{N-p}$, then $\alpha_1 \in [0,1]$ and $q_1 \in \left [s_1, \frac{Np}{N-p}\right ]$;
%	      \item[(c)]
	%	  if $p<N$ and $s_1 > \frac{Np}{N-p}$, then $\alpha_1=\frac{(q_1-s_1)Np}{(Np+s_1(p-N))q_1}$ and $q_1 \in \left [\frac{Np}{N-p},s_1\right ]$;
%	      \item[(d)]
	%	  if $N=1$, then $q_1 \in [s_1,\infty]$ and $\alpha_1 \in \left[ 0, \frac{p}{p+s_1(p-1)} \right ]$.
	  %\end{enumerate}
	  \item
	  For every fixed $s_2\in (1,\infty)$ there exists a constant $C_\Gamma>0$ depending only upon $N, p$ and $s_2$  such that
	  \begin{align*}%\label{multiplicative2}
	      \|u\|_{L^{q_2}(\Gamma)} \leq C_\Gamma \|u\|_{\W1p}^{\alpha_2} \|u\|_{L^{s_2}(\Omega)}^{1-\alpha_2},
	  \end{align*}
	  where $\alpha_2 \in [0,1]$ and $q_2 \in (1,\infty)$ are linked by
	  \begin{align*}%\label{alpha2}
	      \frac{N-1}{q_2}=\alpha_2 \left( \frac{N}{p}-1 \right)+(1-\alpha_2) \frac{N}{s_2}\qquad \text{and} \qquad \alpha_2>\frac{1}{q_2}.
	  \end{align*}
	%  Their admissible range is
	  %\begin{enumerate}
	     % \item[(a)]
		 % if $p \geq N$, then $q_2 \in \left[ \max \left(1,\frac{N-1}{N}s_2\right)  , \infty\right)$ and $\alpha_2 \in \left[ 0, \frac{Np}{Np+s_2(p-N)} \right )$;
	     % \item[(b)]
		 % if $p<N$ and $s_2 \leq \frac{Np}{N-p}$, then $\alpha_2 \in [0,1]$ and\\$q_2 \in \left[\max\left(1,\frac{N-1}{N}s_2\right), \frac{(N-1)p}{N-p} \right]$;
	     % \item[(c)]
		%  if $p<N$ and $s_2 > \frac{Np}{N-p}$, then $\alpha_2 = ???$ and $q_2 \in \left [\frac{(N-1)p}{N-p},\frac{N-1}{N}s_2\right ]$.
	  %\end{enumerate}
    \end{enumerate}

\end{theorem}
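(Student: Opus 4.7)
The plan is to derive both inequalities from the fractional Gagliardo--Nirenberg estimate in Proposition \ref{GN_fractional}, combined with the extension/restriction mechanism of Remark \ref{GN_Omega} and, for part (2), a fractional trace embedding. The whole strategy is to pick the parameters in Proposition \ref{GN_fractional} so that the intermediate Bessel potential space $H^s_{\hat p}$ embeds exactly into the target space on the left-hand side (either $L^{q_1}(\Omega)$ or $L^{q_2}(\Gamma)$).

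\medskip

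\noindent\textbf{Part (1).} First I would extend $u \in W^{1,p}(\Omega)$ to a function $\tilde u \in W^{1,p}(\R^N) = H^{1}_{p}(\R^N)$ via a bounded linear extension operator, as recalled in Remark \ref{GN_Omega}. Then I would apply Proposition \ref{GN_fractional} with the choices
\begin{align*}
    s = 0,\quad \hat s_1 = 1,\quad \hat p = q_1,\quad p_1 = p,\quad p_0 = s_1,\quad \theta = \alpha_1,
\end{align*}
for which the assumption $s \leq \theta \hat s_1$ is trivially satisfied and the scaling relation reduces to the one stated in the theorem. Restricting back to $\Omega$ and using $\|u\|_{L^{q_1}(\Omega)} \le \|\tilde u\|_{L^{q_1}(\R^N)}$ and the continuity of the extension operator finishes the proof. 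This part is essentially bookkeeping.

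\medskip

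\noindent\textbf{Part (2).} This is the key new ingredient and I expect the parameter selection to be the main difficulty. The idea is to interpolate into a Bessel potential space $H^s_{\hat p}(\Omega)$ that is just fine enough to admit a bounded trace into $L^{q_2}(\Gamma)$. Concretely, I would set
\begin{align*}
    s := \alpha_2,\qquad \hat p := \frac{N}{\frac{N-1}{q_2}+\alpha_2},
\end{align*}
so that $\frac{N}{\hat p}-s = \frac{N-1}{q_2}$. With these choices, the relation in the theorem,
\begin{align*}
    \frac{N-1}{q_2} = \alpha_2 \!\left(\frac{N}{p}-1\right) + (1-\alpha_2)\frac{N}{s_2},
\end{align*}
becomes precisely the scaling condition $\frac{N}{\hat p}-s = \theta(\frac{N}{p_1}-\hat s_1)+(1-\theta)\frac{N}{p_0}$ of Proposition \ref{GN_fractional} with $\hat s_1 = 1$, $p_1 = p$, $p_0 = s_2$, $\theta = \alpha_2$; the side condition $s \leq \theta \hat s_1$ holds with equality. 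Extending $u$ to $\R^N$ and applying Proposition \ref{GN_fractional} thus yields
\begin{align*}
    \|u\|_{H^{s}_{\hat p}(\Omega)} \leq C\,\|u\|_{W^{1,p}(\Omega)}^{\alpha_2}\,\|u\|_{L^{s_2}(\Omega)}^{1-\alpha_2}.
\end{align*}

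\medskip

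\noindent\textbf{Trace step and role of $\alpha_2 > 1/q_2$.} It remains to bound $\|u\|_{L^{q_2}(\Gamma)}$ by $\|u\|_{H^s_{\hat p}(\Omega)}$. For a bounded Lipschitz domain the trace operator $H^s_{\hat p}(\Omega)\to B^{\,s-1/\hat p}_{\hat p,\hat p}(\Gamma)$ is bounded provided $s > 1/\hat p$, and this Besov space embeds continuously into $L^{q_2}(\Gamma)$ exactly when $s - \frac{1}{\hat p} - \frac{N-1}{\hat p} \ge -\frac{N-1}{q_2}$, i.e.\ when $\frac{N-1}{q_2} \ge \frac{N}{\hat p}-s$. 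By construction the latter holds with equality. The trace condition $s > 1/\hat p$, after substituting $s = \alpha_2$ and the value of $\hat p$, becomes
\begin{align*}
    \alpha_2 > \frac{1}{N}\!\left(\frac{N-1}{q_2}+\alpha_2\right) \iff (N-1)\alpha_2 > \frac{N-1}{q_2} \iff \alpha_2 > \frac{1}{q_2},
\end{align*}
which is exactly the hypothesis of the theorem. The hardest point is precisely this translation: convincing oneself that the threshold $\alpha_2 > 1/q_2$ stated in the theorem is genuinely dictated by the trace requirement $s > 1/\hat p$ and not by the Sobolev embedding on $\Gamma$. Once this is checked, composing the interpolation estimate with the trace-and-embedding bound, and using the extension operator to move from $\Omega$ to $\R^N$ and back, delivers the desired inequality with a constant depending only on $N$, $p$, and $s_2$.
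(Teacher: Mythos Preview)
Your proposal is correct and follows essentially the same route as the paper: part (1) is identical, and part (2) proceeds via the same chain (fractional Gagliardo--Nirenberg into an intermediate Bessel potential space, then trace to a boundary Besov space, then critical Sobolev embedding on $\Gamma$). The only cosmetic difference is that the paper introduces an auxiliary parameter $r$ with $\tfrac{1}{q_2}<r<\alpha_2$ and sets $s=r$, whereas you take the endpoint $s=\alpha_2$ directly; both choices lead to the same critical embedding $B^{\,s-1/\hat p}_{\hat p,\hat p}(\Gamma)\hookrightarrow L^{q_2}(\Gamma)$, so your streamlined parameter selection works just as well.
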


\begin{proof}
    We may apply Proposition \ref{GN_fractional} and Remark \ref{GN_Omega} with $s=0, \hat{p}=q_1, \hat{s}_1=1, p_1=p$, $p_0=s_1$ and $\alpha_1=\theta$. This yields the assertion of (1). Let us prove part (2).
    Since $\alpha_2>\frac{1}{q_2}$ we may fix a real number $r$ such that $\frac{1}{q_2}<r<\alpha_2$. Then we choose the number $q$ such that
    \begin{align}\label{value_q}
	\frac{r}{N}-\frac{1}{q} = - \frac{N-1}{Nq_2}.
    \end{align}
    From (\ref{value_q}) we see that $rq<N$ and
    \begin{align*}
	q=\frac{Nq_2}{rq_2+N-1}.
    \end{align*}
    Due to $\frac{1}{q_2}<r$ we have $q<q_2$ and since $N>1$ we derive $rq>1$
%     \begin{align*}
% 	q_2>q \Longleftrightarrow q_2(N-1)>(N-1)q \Longleftrightarrow \frac{Nq_2-Nq+q}{q_2}>1 \Longleftrightarrow N- \frac{N-1}{q_2}q>1 \Longleftrightarrow rq>1
%     \end{align*}
    thanks to the representation in (\ref{value_q}). Then, the embedding
    \begin{align}\label{Besov_boundary}
	F^{r}_{q2}(\Om) \hookrightarrow B^{r-\frac{1}{q}}_{qq}(\Gamma)
    \end{align}
    is continuous (see Triebel \cite[Section 3.3.3]{Triebel-1983}), where $B^r_{qq}$ denotes the Besov space, which coincides with the Sobolev Slobodeckij space $W^{r}_{q}$ ($r\in (0,1)$) and $F^r_{q2}$ stands for the Lizorkin-Triebel space which coincides with the Bessel potential space $H^r_q$ (see Triebel \cite[Section 2.3.5]{Triebel-1983}). In Triebel \cite[Section 3.3.3]{Triebel-1983},  a $C^\infty$-domain is required, but it is known that if $r=m+\iota$ with $m \in \N_0$ and $0 \leq \iota<1$, the embedding is still valid if $\Gamma \in C^{m,1}$. Since in our case $r<1$ we only need a Lipschitz
    boundary, that means $\Gamma \in C^{0,1}$. By virtue of the Sobolev embedding theorem for fractional order spaces it follows
    \begin{align}\label{fractional_sobolev}
        B^{r-\frac{1}{q}}_{qq}(\Gamma) \hookrightarrow L^{q_2}(\Gamma) \text{ for } q \leq q_2 \leq q^*  \text{ with }
        q^*=
        \begin{cases}
            \frac{(N-1)q}{N-rq} \quad & \text{if }rq<N,\\
            \tilde{q} \in [q,\infty) & \text{if }rq \geq N,
        \end{cases}
    \end{align}
    (see Adams \cite[Theorem 7.57]{Adams-1975}). Combining (\ref{value_q}), (\ref{Besov_boundary}) and (\ref{fractional_sobolev}) we find a positive constant $\hat{C}_1$ such that
    \begin{align}\label{GN_1}
	\|u\|_{L^{q_2}(\Gamma)} \leq \hat{C}_1 \|u\|_{F^{r}_{q2}(\Omega)} \quad \text{with} \quad  \frac{r}{N}-\frac{1}{q} = - \frac{N-1}{Nq_2}.
    \end{align}
    Now we may apply Proposition \ref{GN_fractional} and Remark \ref{GN_Omega} with $s=r, p=q, s_1=1, p_1=p$ and $p_0=s_2$ which results in
    \begin{align*}%\label{GN_2}
	\|u\|_{H^{r}_{q}(\Omega)} \leq C\|u\|^{\theta}_{W^{1,p}(\Omega)}\|u\|^{1-\theta}_{L^{s_2}(\Omega)}
    \end{align*}
    with
    \begin{align}\label{GN_3}
	r-\frac{N}{q}=\theta \left(1- \frac{N}{p}\right)+(1-\theta) \left(-\frac{N}{s_2}\right) \qquad \text{and} \qquad r \leq \theta.
    \end{align}
    Since $H^r_q=F^r_{q2}$ we obtain the assertion in (2) from (\ref{GN_1})--(\ref{GN_3}) with $\alpha_2=\theta$.
\end{proof}

\newpage

\begin{remark}~
 {\em   \begin{enumerate}
	\item[(i)]  
	    If $p \neq \frac{Ns_1}{N+s_1}$ and $p \neq \frac{Ns_2}{N+s_2}$, respectively, the exponents $\alpha_1$ and $\alpha_2$ are given by
	    \begin{align*}
		& \alpha_1=\left(\frac{1}{s_1}-\frac{1}{q_1} \right) \left( \frac{1}{N}-\frac{1}{p}+\frac{1}{s_1}\right)^{-1},\\
		& \alpha_2=\left(\frac{1}{s_2}-\frac{N-1}{Nq_2} \right) \left( \frac{1}{N}-\frac{1}{p}+\frac{1}{s_2}\right)^{-1}.
	    \end{align*}
	\item[(ii)]
	    Note that in the second part of Theorem \ref{multiplicative_inequalities}, the choice $s_2=q_2=p$ is not admissible, as this leads to $\alpha_2=\frac{1}{p}$, so that the condition $\alpha_2>\frac{1}{q_2}$ is violated. However, the theorem still provides a similar estimate of the $L^p(\Gamma)$-norm from above in terms of the $W^{1,p}(\Omega)$- and $L^p(\Omega)$-norm. In fact, take $q_2=p+\varepsilon$ with small $\varepsilon>0$ and simply apply H\"older's inequality and the second part of Theorem \ref{multiplicative_inequalities} to see this. We refer to a paper of the first author \cite[Proof of Proposition 2.1]{Winkert-2014} for a similar result.
    \end{enumerate}}
\end{remark}

As a consequence of Theorem \ref{multiplicative_inequalities} we obtain two parabolic embedding inequalities  which will be useful in later considerations. The first one should be well known, see e.g.\,Chapter I in DiBenedetto \cite{DiBenedetto-1993}, which contains several variants of it (e.g.\,in the special case of vanishing boundary traces). However, we could not find any reference for the second one, which plays an important role in deriving optimal parabolic boundary estimates.    

\begin{proposition}\label{energy_estimate}
   Let $\Omega \subset \R^N$, $N> 1$, be a bounded domain with Lipschitz boundary $\Gamma:=\partial \Omega$. Let $T>0$ and $1<p<\infty$.
    \begin{enumerate}
	\item
	    There exists a constant $C_\Omega>0$ which is independent of $T$ such that
	    \begin{align*}
		& \int_0^{T} \int_\Omega |u(t,x)|^{q_1} dx dt\\
		& \leq C_\Omega^{q_1} \left ( \int_0^{T} \int_{\Omega} |\nabla u(t,x)|^p dx dt+\int_0^{T} \int_{\Omega} |u(t,x)|^p dx dt \right )\\
		& \quad \times \left (\esssup_{0<t<T} \int_\Omega |u(t,x)|^2 dx \right )^{\frac{p}{N}}
	    \end{align*}
	    for all $u \in L^{\infty}\left([0,T];L^2(\Om)\right)\cap L^p\left([0,T];W^{1,p}(\Omega)\right)$ with the exponent
	    \begin{align*}
		q_1=p \frac{N+2}{N}.
	    \end{align*}
	\item
	   There exists a constant $C_\Gamma>0$ which is independent of $T$ such that
	    \begin{align*}
		& \int_0^{T} \int_{\Gamma} |u(t,x)|^{q_2} d\sigma dt\\
		& \leq C_\Gamma^{q_2} \left ( \int_0^{T} \int_{\Omega} |\nabla u(t,x)|^p dx dt+\int_0^{T} \int_{\Omega} |u(t,x)|^p dx dt \right )\\
		& \quad \times\left (\esssup_{0<t<T} \int_\Omega |u(t,x)|^2 dx \right )^{\frac{p-1}{N}}
	    \end{align*}
	    for all $u \in L^{\infty}\left([0,T];L^2(\Om)\right)\cap L^p\left([0,T];W^{1,p}(\Omega)\right)$ with the exponent
	    \begin{align*}
		q_2=p\frac{N+2}{N}-\frac{2}{N}.
	    \end{align*}
    \end{enumerate}
\end{proposition}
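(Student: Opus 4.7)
The plan is to apply the two interpolation inequalities of Theorem \ref{multiplicative_inequalities} pointwise in time with $s_1=s_2=2$, then raise the resulting bound to the $q_i$-th power and integrate in $t$. The exponents $\alpha_i$ will be chosen so that the power of the $W^{1,p}(\Omega)$-norm after exponentiation is exactly $p$; this makes the $W^{1,p}$-contribution integrable against the assumed Bochner norm, while the leftover power of the $L^2(\Omega)$-norm can be pulled out as an essential supremum in $t$. Throughout, continuity in $t$ of the $L^2$-mass is not needed, only measurability and the $L^\infty$-bound in the first factor.

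For part (1), set $s_1=2$ and impose $\alpha_1 q_1=p$. Substituting $\alpha_1=p/q_1$ into the linking relation $\frac{N}{q_1}=\alpha_1(\frac{N}{p}-1)+(1-\alpha_1)\frac{N}{2}$ gives, after a short computation, $q_1=p\frac{N+2}{N}$ and $\alpha_1=\frac{N}{N+2}$. The residual exponent on the $L^2$-norm is $(1-\alpha_1)q_1=\frac{2p}{N}$. Theorem \ref{multiplicative_inequalities}(1), raised to the $q_1$-th power, therefore yields for a.e.\ $t\in(0,T)$
\begin{align*}
\|u(t)\|_{L^{q_1}(\Omega)}^{q_1}\le C_\Omega^{q_1}\,\|u(t)\|_{W^{1,p}(\Omega)}^{p}\,\bigl(\|u(t)\|_{L^2(\Omega)}^{2}\bigr)^{p/N},
\end{align*}
and integrating in $t$ while extracting the $L^2$-factor as an essential supremum gives the claim.

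For part (2), set $s_2=2$ and again require $\alpha_2 q_2=p$. Inserting $\alpha_2=p/q_2$ into the linking identity $\frac{N-1}{q_2}=\alpha_2(\frac{N}{p}-1)+(1-\alpha_2)\frac{N}{2}$ of Theorem \ref{multiplicative_inequalities}(2) and solving yields $q_2=p\frac{N+2}{N}-\frac{2}{N}$ and $\alpha_2=\frac{pN}{pN+2p-2}$. The remaining $L^2$-exponent is $(1-\alpha_2)q_2=\frac{2(p-1)}{N}$, matching the power $(p-1)/N$ claimed in the statement. Raising the boundary interpolation inequality to the $q_2$-th power and integrating as in part (1) then produces the asserted bound, with $C_\Gamma$ independent of $T$ since it originates from a pointwise-in-$t$ estimate on $\Omega$. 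The main subtlety here is verifying the side condition $\alpha_2>\frac{1}{q_2}$ imposed by Theorem \ref{multiplicative_inequalities}(2): with the choice $\alpha_2=p/q_2$ this reduces to $p>1$, which is exactly our standing assumption, so the critical exponent $q_2=p\frac{N+2}{N}-\frac{2}{N}$ is indeed reached. All other steps are routine exponent bookkeeping.
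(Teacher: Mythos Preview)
Your proof is correct and follows exactly the same route as the paper's: apply Theorem~\ref{multiplicative_inequalities} pointwise in $t$ with $s_1=s_2=2$ and $\alpha_i=p/q_i$, raise to the $q_i$-th power, and integrate in $t$, pulling out the $L^2$-factor as an essential supremum. Your explicit verification of the side condition $\alpha_2>1/q_2$ (equivalent to $p>1$) matches the paper's observation that $\alpha_2=p/q_2>1/q_2$.
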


\begin{proof}
    In order to prove the first part we may apply Theorem \ref{multiplicative_inequalities}(1) to the function $x \mapsto u(t,x)$ for a.a.\,$t \in (0,T)$ for $s_1=2$ and $q_1=p \frac{N+2}{N}$, which means that $\alpha_1 =\frac{p}{q_1}$.
    Taking the $q_1^{\text{th}}$-power of this inequality and integrating over $(0,T)$ yields
    \begin{align*}
	& \int_0^{T} \int_\Omega |u(t,x)|^{q_1} dx dt \\
	& \leq C_\Omega^{q_1} \int_0^{T} \left [\left (\int_{\Omega} |\nabla u(t,x)|^p dx +\int_{\Omega} |u(t,x)|^p dx dt \right ) \left (\int_\Omega |u(t,x)|^2 dx \right )^{\frac{p}{N}}\right ]dt\\
	& \leq C_\Omega^{q_1} \left ( \int_0^{T} \int_{\Omega} |\nabla u(t,x)|^p dx dt+\int_0^{T} \int_{\Omega} |u(t,x)|^p dx dt \right )\\
	& \qquad \quad \times \left (\esssup_{0<t<T} \int_\Omega |u(t,x)|^2 dx \right )^{\frac{p}{N}}.
    \end{align*}
    The second part can be proven similarly. We apply again Theorem \ref{multiplicative_inequalities}(2) to the function $x \mapsto u(t,x)$ for a.a.\,$t \in (0,T)$ for $s_2=2$ and $q_2=p \frac{N+2}{N}-\frac{2}{N}$ which gives $\alpha_2 =\frac{p}{q_2}>\frac{1}{q_2}$. Taking the $q_2^{\text{th}}$-power of this inequality and integrating over $(0,T)$ we obtain
    \begin{align*}
	& \int_0^{T} \int_\Gamma |u(t,x)|^{q_2} d\sigma dt \\
	& \leq C_\Gamma^{q_2} \int_0^{T} \left [\left (\int_{\Omega} |\nabla u(t,x)|^p dx +\int_{\Omega} |u(t,x)|^p dx dt \right ) \left (\int_\Omega |u(t,x)|^2 dx \right )^{\frac{p-1}{N}}\right ]dt\\
	& \leq C_\Gamma^{q_2} \left ( \int_0^{T} \int_{\Omega} |\nabla u(t,x)|^p dx dt+\int_0^{T} \int_{\Omega} |u(t,x)|^p dx dt \right )\\
	& \qquad \quad \times  \left (\esssup_{0<t<T} \int_\Omega |u(t,x)|^2 dx \right )^{\frac{p-1}{N}}.
    \end{align*}
\end{proof}

% The following lemma concerning the geometric convergence of sequences of numbers will be needed for the De Giorgi iteration arguments below. It can be found, for example in Vergara-Zacher \cite{Vergara-Zacher-2010}. The case $\delta_1=\delta_2$ is contained in Lady{\v{z}}enskaja-Solonnikov-Ural{\cprime}ceva \cite[Chapter II, Lemma 5.6]{Ladyzenskaja-Solonnikov-Uralceva-1968}, see also DiBenedetto \cite[Chapter I, Lemma 4.1]{DiBenedetto-1993}.
% \begin{lemma}\label{lemma_geometric_convergence}
%     Let $\{Y_n\}, n=0,1,2,\ldots,$ be a sequence of positive numbers, satisfying the recursion inequality
%     \begin{align*}
%         Y_{n+1} \leq K b^n \left (Y_n^{1+\delta_1}+ Y_n^{1+\delta_2} \right ) , \quad n=0,1,2, \ldots,
%     \end{align*}
%     for some $b>1,\,K>1$ and $\delta_2\geq \delta_1>0$. If
%     \begin{align*}
%         Y_0 \leq (2K)^{-\frac{1}{\delta_1}} b^{-\frac{1}{\delta_1^2}},
%     \end{align*}
%     then
%     \begin{align*}
%         Y_n \leq (2K)^{-\frac{1}{\delta_1}} b^{-\frac{1}{\delta_1^2}} b^{-\frac{n}{\delta_1}}, \quad n \in \N,
%     \end{align*}
%     in particular $\{Y_n\} \to 0$ as $n \to \infty$.
% \end{lemma}

The following lemma concerning the geometric convergence of sequences of numbers will be needed for the De Giorgi iteration arguments below. It can be found in Ho-Sim \cite[Lemma 4.3]{Ho-Sim-2015} . The case $\delta_1=\delta_2$ is contained in Lady{\v{z}}enskaja-Solonnikov-Ural{\cprime}ceva \cite[Chapter II, Lemma 5.6]{Ladyzenskaja-Solonnikov-Uralceva-1968}, see also DiBenedetto \cite[Chapter I, Lemma 4.1]{DiBenedetto-1993}.
\begin{lemma}\label{lemma_geometric_convergence}
    Let $\{Y_n\}, n=0,1,2,\ldots,$ be a sequence of positive numbers, satisfying the recursion inequality
    \begin{align*}
        Y_{n+1} \leq K b^n \left (Y_n^{1+\delta_1}+ Y_n^{1+\delta_2} \right ) , \quad n=0,1,2, \ldots,
    \end{align*}
    for some $b>1,\,K>0$ and $\delta_2\geq \delta_1>0$. If
    \begin{align*}
        Y_0 \leq \min \left(1,(2K)^{-\frac{1}{\delta_1}} b^{-\frac{1}{\delta_1^2}}\right)
    \end{align*}
    or
    \begin{align*}
        Y_0 \leq \min  \left((2K)^{-\frac{1}{\delta_1}} b^{-\frac{1}{\delta_1^2}}, (2K)^{-\frac{1}{\delta_2}}b^{-\frac{1}{\delta_1 \delta_2}-\frac{\delta_2-\delta_1}{\delta_2^2}}\right),
    \end{align*}
    then $Y_n \leq 1$ for some $n \in \N \cup \{0\}$. Moreover,
    \begin{align*}
        Y_n \leq \min \left(1,(2K)^{-\frac{1}{\delta_1}} b^{-\frac{1}{\delta_1^2}} b^{-\frac{n}{\delta_1}}\right), \quad \text{ for all }n \geq n_0,
    \end{align*}
    where $n_0$ is the smallest $n \in \N \cup \{0\}$ satisfying $Y_n \leq 1$. In particular, $Y_n \to 0$ as $n \to \infty$.
\end{lemma}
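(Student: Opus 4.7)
My plan is to exploit the dichotomy $Y_n\lessgtr 1$: if $Y_n\leq 1$, then $Y_n^{1+\delta_2}\leq Y_n^{1+\delta_1}$ and the recursion reduces to $Y_{n+1}\leq 2Kb^n Y_n^{1+\delta_1}$, while if $Y_n\geq 1$, a symmetric argument gives $Y_{n+1}\leq 2Kb^n Y_n^{1+\delta_2}$. In each regime I am reduced to a classical single-exponent De Giorgi iteration, which I would encapsulate in the following auxiliary claim proved by a one-line induction: whenever $Z_{n+1}\leq \tilde K b^n Z_n^{1+\delta}$ for all $n\geq n_\ast$ and $Z_{n_\ast}\leq \tilde K^{-1/\delta}b^{-n_\ast/\delta-1/\delta^2}$, then $Z_n\leq Z_{n_\ast}b^{-(n-n_\ast)/\delta}$ for all $n\geq n_\ast$; the inductive step reduces to checking $\tilde K Z_{n_\ast}^{\delta}b^{n_\ast+1/\delta}\leq 1$, which is exactly the hypothesis on $Z_{n_\ast}$.

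Under the first hypothesis (in particular $Y_0\leq 1$), I would apply this claim with $(\tilde K,\delta,n_\ast)=(2K,\delta_1,0)$ to get $Y_n\leq Y_0 b^{-n/\delta_1}\leq 1$ for every $n\geq 0$, so $n_0=0$ works. Under the second hypothesis, $Y_0$ may exceed $1$, so I would first iterate with $\delta_2$: the given threshold implies $Y_0\leq (2K)^{-1/\delta_2}b^{-1/\delta_2^2}$ (because $\delta_2\geq\delta_1$ makes the $b$-exponent in the hypothesis at least $1/\delta_2^2$), so the auxiliary claim delivers $Y_n\leq Y_0 b^{-n/\delta_2}$ as long as $Y_n\geq 1$. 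Letting $n_0$ be the smallest nonnegative integer with $Y_0 b^{-n_0/\delta_2}\leq 1$, I would then rerun the auxiliary claim at starting index $n_\ast=n_0$ with the $\delta_1$-reduced recursion, obtaining $Y_n\leq Y_{n_0}b^{-(n-n_0)/\delta_1}\to 0$ together with the stated bound for $n\geq n_0$.

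The main obstacle in this strategy is verifying that the hypothesis on $Y_0$ is precisely calibrated so that the switchover at $n_0$ is admissible, namely $Y_{n_0}\leq (2K)^{-1/\delta_1}b^{-n_0/\delta_1-1/\delta_1^2}$. Substituting $Y_{n_0}\leq Y_0 b^{-n_0/\delta_2}$ together with $n_0\leq \delta_2\log_b Y_0 + 1$, isolating $\log_b Y_0$, and using the identity $1+(\delta_2-\delta_1)/\delta_1=\delta_2/\delta_1$ reduces this inequality, after routine algebra, to exactly $Y_0\leq (2K)^{-1/\delta_2}b^{-1/(\delta_1\delta_2)-(\delta_2-\delta_1)/\delta_2^2}$, the second bound in the hypothesis. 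With this matching in hand, the conclusion $Y_n\to 0$ and the explicit decay are immediate from the auxiliary claim.
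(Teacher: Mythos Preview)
The paper does not actually prove this lemma; it is quoted from Ho--Sim \cite{Ho-Sim-2015} (with the special case $\delta_1=\delta_2$ attributed to Lady\v{z}enskaja--Solonnikov--Ural{\cprime}ceva and DiBenedetto). So there is no in-paper proof to compare against. Your approach---split according to $Y_n\lessgtr 1$, reduce to the single-exponent recursion $Y_{n+1}\le 2Kb^nY_n^{1+\delta}$ in each regime, and invoke the classical geometric-decay induction---is the natural one and is essentially correct; the switchover algebra you outline does indeed collapse exactly to the second threshold in the hypothesis.

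One point needs tightening. You define $n_0$ as the smallest integer with $Y_0 b^{-n_0/\delta_2}\le 1$, but the lemma's $n_0$ is the smallest index with $Y_{n_0}\le 1$; these can differ, because $Y_m$ may drop below $1$ while the upper bound $Y_0 b^{-m/\delta_2}$ is still $>1$, and once $Y_m<1$ the $\delta_2$-recursion is no longer available, so you cannot propagate the bound $Y_n\le Y_0 b^{-n/\delta_2}$ up to your larger index. The fix is immediate: work instead with the lemma's $n_0$ (which exists, since otherwise $Y_n\ge 1$ for all $n$ would give $1\le Y_n\le Y_0 b^{-n/\delta_2}\to 0$). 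Then $Y_0,\dots,Y_{n_0-1}>1$ yields both $Y_{n_0}\le Y_0 b^{-n_0/\delta_2}$ and, from $1<Y_{n_0-1}\le Y_0 b^{-(n_0-1)/\delta_2}$, the bound $n_0\le \delta_2\log_b Y_0+1$. With these two ingredients your switchover verification and the subsequent $\delta_1$-iteration go through verbatim and give the stated decay $Y_n\le \min\bigl(1,(2K)^{-1/\delta_1}b^{-1/\delta_1^2}b^{-n/\delta_1}\bigr)$ for $n\ge n_0$.
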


Throughout the paper by $M_i, \tilde{M}_j$ $i,j=1,2, \ldots$ we mean positive constants depending on the given data and the Lebesgue measure on $\R^N$ is denoted by $|\cdot|_N$.

%***********************************************************************************************************************************
%***********************************************************************************************************************************
\section{Smoothing operators and regularized weak formulation}\label{Section_smoothing_operators}
%***********************************************************************************************************************************
%***********************************************************************************************************************************

Let $\rho\geq 0$ be in $C^\infty_0(\R^N)$, even, $\int_{\R^N} \rho dx=1$ and $\supp \rho =B(0,1)$. Define for $h>0$
\begin{align*}%\label{operator_space}
    (S_h w)(x):= \frac{1}{h^N} \int_{\R^N} \rho \left( \frac{x-x'}{h}\right )w(x')dx', \quad x\in \R^N, w\in L^1_{\loc}(\R^N).
\end{align*}
Let $T> 0$, $e_1(t)=e^{-t}, t\ge 0$ and set
\begin{align*}
    & (\tau_h w)(t):= \frac{1}{h} \int_0^t e_1 \left( \frac{t-t'}{h} \right) w(t')dt', \quad 0 \le t \le T, w \in L^1((0,T)),\\%\label{operator_time}\\
    & \left(\tau_h^* w\right)(t):= \frac{1}{h} \int_t^T e_1 \left( \frac{t'-t}{h} \right) w(t')dt', \quad 0 \le t \le T, w \in L^1((0,T)).%\label{operator_timedual}
\end{align*}
Note that Fubini's theorem implies
\begin{equation*}% \label{tauduality}
    \int_0^T v(t)(\tau_h^*w)(t)\,dt=\int_0^T(\tau_h v)(t)w(t)\,dt,\quad v,w\in L^1((0,T)).
\end{equation*}

Let $\Omega \subset \R^N$ be a bounded domain with Lipschitz boundary $\Gamma$ and let $p \in C(\overline{Q}_T)$ be such that $\inf_{\overline{Q}_T} p>1$ satisfying the log-H\"older condition stated in (P). By Diening-Harjulehto-H{\"a}st{\"o}-R\r u\v zi\v cka \cite[Proposition 4.1.7]{Diening-Harjulehto-Hasto-Ruzicka-2011}, $p$ can be extended to a continuous function $\tilde{p}$ on $[0,T] \times \R^N$ which fulfills $\inf_{[0,T] \times \R^N} \tilde{p}>1$ and satisfies the log-H\"older condition (P) on $[0,T] \times \R^N$. Set
\begin{align*}
    \tilde{\mathcal{V}}:=\left\{\psi \in W^{1,2}\left([0,T];L^2(\R^N)\right): |\nabla \psi| \in L^{\tilde{p}(\cdot,\cdot)}([0,T] \times \R^N) \right\}.
\end{align*}

For $h>0$, let $E_h$ be a bounded linear extension operator from $\mathcal{V}$ into $\tilde{\mathcal{V}}$
whose range is contained in the set of measurable functions that vanish almost everywhere outside of $(0,T)\times {\Omega_h}$ where $\Omega_h=\{x\in \R^N: \mbox{dist}(x,\Omega)<h^\gamma\}$ with $\gamma>2$ being fixed. Such an operator can be constructed as in Diening-Harjulehto-H{\"a}st{\"o}-R\r u\v zi\v cka \cite[Theorem 8.5.12]{Diening-Harjulehto-Hasto-Ruzicka-2011} using the log-H\"older condition of $\tilde{p}$ and by means of a suitable cut-off function. Here the construction of the operator can be made in such a way that $E_h$ also maps $L^\infty((0,T)\times \Omega)$ boundedly into $ L^\infty((0,T)\times \R^N)$ with
a corresponding norm bound that is uniform w.r.t.\,$h>0$.

\begin{lemma} \label{smoothingmapprop}
    Under the above assumptions the operators $\tau_h S_h  E_h, \tau_h^* S_h  E_h$ map from $\mathcal{V}$ into $\tilde{\mathcal{V}}$.  
\end{lemma}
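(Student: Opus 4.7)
The plan is to verify the three defining conditions of $\tilde{\mathcal{V}}$ are preserved when we first apply $E_h$, then $S_h$, then $\tau_h$ (and, by a symmetric argument, $\tau_h^*$). Since $E_h$ already lands in $\tilde{\mathcal{V}}$ by assumption, the work reduces to showing that $\tau_h S_h$ acts boundedly on $\tilde{\mathcal{V}}$.

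First I would treat the spatial mollification $S_h$, which is convolution with the smooth kernel $\rho_h(x)=h^{-N}\rho(x/h)$. It commutes with $\partial_t$ and with $\nabla$, and by Young's inequality is bounded on $L^2(\R^N)$; applied slicewise in $t$ this gives $S_h\tilde\psi\in W^{1,2}([0,T];L^2(\R^N))$ with $\partial_t(S_h\tilde\psi)=S_h(\partial_t\tilde\psi)$, where $\tilde\psi:=E_h\psi$. To handle the variable-exponent integrability of $\nabla(S_h\tilde\psi)=S_h(\nabla\tilde\psi)$, I would invoke the well-known boundedness of spatial mollifiers on $L^{\tilde p(\cdot,\cdot)}(\R^N)$ under log-H\"older continuity (see e.g.\,\cite{Diening-Harjulehto-Hasto-Ruzicka-2011}), combined with Fubini over $(0,T)$. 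Thus $S_h\tilde\psi\in\tilde{\mathcal{V}}$.

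Next I would apply $\tau_h$ to $w:=S_h\tilde\psi$. Since $\tau_h$ operates purely in time, it commutes with $\nabla$ and with $S_h$. Differentiating under the integral sign yields the key identity
\[
\partial_t(\tau_h w)(t)=\tfrac{1}{h}\bigl(w(t)-(\tau_h w)(t)\bigr).
\]
Extending $w$ by zero outside $[0,T]$, the operator $\tau_h$ is convolution with the kernel $k_h(s)=\tfrac{1}{h}e^{-s/h}\mathbf{1}_{s\ge 0}$ of unit $L^1(\R)$-mass, so Young's inequality gives $\|\tau_h w\|_{L^2([0,T];L^2)}\le\|w\|_{L^2([0,T];L^2)}$; combined with the above derivative identity this yields $\tau_h w\in W^{1,2}([0,T];L^2(\R^N))$. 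For the gradient term, $\nabla(\tau_h w)=\tau_h(\nabla w)$, and the analogous variable-exponent mollifier boundedness, this time in the time direction and using the log-H\"older continuity of $\tilde p$ in $t$, yields $\tau_h(\nabla w)\in L^{\tilde p(\cdot,\cdot)}([0,T]\times\R^N)$. Assembling these three facts gives $\tau_h S_h E_h\psi\in\tilde{\mathcal{V}}$.

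The hard part will be the boundedness of the one-sided time mollifier $\tau_h$ on $L^{\tilde p(\cdot,\cdot)}$: for constant exponents it is immediate from Young's inequality, but in the variable-exponent setting it requires a nontrivial exploitation of the log-H\"older property (P) extended to $[0,T]\times\R^N$. I expect this to follow by adapting the classical mollifier estimate, e.g., by dominating $\tau_h$ pointwise by a suitable (space-time) maximal operator whose boundedness on $L^{\tilde p(\cdot,\cdot)}$ is standard, or by a direct estimate using the log-H\"older modulus to control oscillations of $\tilde p$ on the time scale $h$. The argument for $\tau_h^*S_hE_h$ is identical after a reflection in time, using $\int_0^T v(\tau_h^*w)\,dt=\int_0^T(\tau_h v)w\,dt$ to identify the adjoint structure.
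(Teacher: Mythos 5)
The paper itself does not give a direct proof of this lemma: it simply refers to Zhikov--Pastukhova \cite[Theorem 1.4]{Zhikov-Pastukhova-2010}, where a smoothing result of this type is established for parabolic variable-exponent spaces. Your proposal attempts a self-contained argument, so it is necessarily a different route. The overall decomposition ($E_h$ lands in $\tilde{\mathcal V}$ by construction, $S_h$ commutes with $\partial_t$ and $\nabla$, and $\tau_h$ commutes with $\nabla$ while satisfying $\partial_t(\tau_h w)=\tfrac1h(w-\tau_h w)$) is sound and consistent with what the paper uses elsewhere, and the reduction to boundedness of the spatial mollifier $S_h$ and the one-sided time smoother $\tau_h$ on $L^{\tilde p(\cdot,\cdot)}$ is the right skeleton.

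The genuine gap is the step you yourself flag as ``the hard part'': boundedness of $\tau_h$ on $L^{\tilde p(\cdot,\cdot)}$. You suggest treating it as ``the analogous variable-exponent mollifier boundedness, this time in the time direction,'' but the time kernel $k_h(s)=\tfrac1h e^{-s/h}\mathbf 1_{s\ge0}$ is \emph{not} a mollifier of the kind to which the standard result applies: $\rho$ in $S_h$ has compact support shrinking to a point as $h\to0$, whereas $k_h$ has unbounded support (exponential tail). Consequently the standard log-H\"older estimate for mollifier families in \cite{Diening-Harjulehto-Hasto-Ruzicka-2011} does not transfer verbatim, and the maximal-function domination you mention does not follow from a single averaging bound either; one needs, e.g., a dyadic decomposition of the exponential tail together with the log-H\"older control, or a direct modular estimate that exploits the exponential decay. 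Your proposal names two plausible strategies but executes neither, so this step remains unjustified. Two further small points you gloss over: (i) applying the slicewise $S_h$-bound and then ``Fubini'' does not directly control the Luxemburg norm on $[0,T]\times\R^N$ — one should argue at the level of the modular, using that the log-H\"older constant of $\tilde p(t,\cdot)$ is uniform in $t$; and (ii) it is worth noting (and helps here) that $E_h\psi$, and hence $\tau_h S_h E_h\psi$, is supported in a fixed bounded space-time set, so all modular estimates take place on a set of finite measure. With these repairs the argument would match what the cited Zhikov--Pastukhova theorem provides; as written, the crucial convolution bound is still a sketch.
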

The proof of the Lemma \ref{smoothingmapprop} can be done similarly as in Zhikov-Pastukhova \cite[Theorem 1.4]{Zhikov-Pastukhova-2010}.

By means of the smoothing operators introduced before we next derive a regularized weak formulation of \eqref{problem}. To this end, let $u \in \mathcal{W}$ be a {weak solution} ({subsolution, supersolution}) of \eqref{problem} in the sense of \eqref{weak_solution} and choose the test function $\varphi$ of the form
\begin{align*}
    \varphi(t,x)=(\tau_h^* S_h E_h \eta) (t,x), \quad (t,x) \in \Omega_T,
\end{align*}
where $\eta \in \mathcal{V}$ is nonnegative and $\eta|_{t=T}=0$. Observe that this test function is admissible by Lemma \ref{smoothingmapprop}
and since $\varphi|_{t=T}=0$. Note that the latter property implies that $\partial_t(\tau_h^* S_h E_h \eta)=\tau_h^* S_h \partial_t (E_h\eta)$.
In fact, for $w\in W^{1,2}((0,T))$ with $w|_{t=T}=0$ we have
\begin{align*}
    (\tau_h^* w)(t)=\frac{1}{h}\,\int_0^{T-t} e_1\left(\frac{s}{h}\right) w(s+t)\,ds, \quad t\in (0,T),
\end{align*}
and thus
\begin{align*}
    \partial_t (\tau_h^* w)(t) & =-\frac{1}{h} e_1\left(\frac{T-t}{h}\right)w(T)+\frac{1}{h}\,\int_0^{T-t} e_1\left(\frac{s}{h}\right) w_s(s+t)\,ds\\
    & = \frac{1}{h}\, \int_t^T e_1 \left( \frac{t'-t}{h} \right) w_{t'}(t')dt'=(\tau_h^* w_t)(t),\quad t\in (0,T).
\end{align*}

We obtain
\begin{align} \label{regf1}
    \begin{split}
        & - \into u_0 (\tau_h^* S_h E_h \eta) dx \Bigl |_{t=0}- \int_{0}^{T}\into u\, \tau_h^* S_h [(E_h \eta)_t] dx dt\\
        & + \int_{0}^{T}\into \mathcal{A}(t,x,u,\nabla u) \cdot \nabla \left(\tau_h^* S_h E_h \eta\right) dxdt\\
	& =\,(\le,\,\ge) \int_{0}^{T} \into \mathcal{B}(t,x,u,\nabla u) \left(\tau_h^* S_h E_h \eta\right) dxdt\\ 
	& \quad + \int_{0}^{T} \int_{\Gamma} \mathcal{C}(t,x,u) \left(\tau_h^* S_h E_h \eta\right) d \sigma dt.
    \end{split}
\end{align}
The first integral in (\ref{regf1}) takes the form
\begin{align*}
    \into u_0 (\tau_h^* S_h E_h \eta) dx \Bigl |_{t=0}=\frac{1}{h}\int_0^T\int_\Omega e_1\left(\frac{t}{h}\right)u_0(x) (S_h E_h \eta)(t,x)\,dx\,dt.
\end{align*}
The term involving the time derivative is rewritten as follows
\begin{align*}
    \begin{split}
	& - \int_{0}^{T}\into u\, \tau_h^* S_h [(E_h \eta)_t] dx dt\\
	& = - \int_{0}^{T}\int_{\R^N} E_h u\, \tau_h^* S_h [(E_h \eta)_t] dx dt+\int_0^T \int_{\Omega_h\setminus \Omega} E_h u\, \tau_h^* S_h [(E_h \eta)_t] dx dt\\
	& = \int_{0}^{T}\int_{\Omega_h} (\tau_h S_h E_h u)_t\, E_h \eta dx dt-\int_0^T\int_{\Omega_h\setminus \Omega} (\tau_h E_h u)_t\, S_h E_h \eta dx dt.
    \end{split}
\end{align*}
The remaining three terms in (\ref{regf1}) are reformulated using the duality of $\tau_h$ and $\tau_h^*$. Since the resulting relation does not contain a time derivative acting on the test function, the regularity assumptions on $\eta$ can be relaxed, in fact, by approximation, we may allow $\eta$ to be from the space $\mathcal{W}$ satisfying $\eta|_{t=T}=0$.

Next, let $0<t_1<t_2<T$ and choose $\eta$ of the form $\eta(t,x)=\psi(t,x)
\omega_{[t_1,t_2],\varepsilon}(t)$, where $\psi\in \mathcal{W}$ is nonnegative and $\omega:=\omega_{[t_1,t_2],\varepsilon}$ is defined by 
\begin{align*}
    \omega=
    \begin{cases}
	0 & \text{if } t \in [0,t_1-\varepsilon]\\
	\frac{1}{\varepsilon}(t-t_1+\varepsilon) & \text{if } t \in [t_1-\varepsilon,t_1]\\
	1 & \text{if } t \in [t_1,t_2]\\
	-\frac{1}{\varepsilon}(t-t_2-\varepsilon) & \text{if } t \in [t_2,t_2+\varepsilon]\\
	0 & \text{if } t \in [t_2+\varepsilon,T]
    \end{cases}
\end{align*}
assuming that $0<\varepsilon<\min\{t_1,T-t_2\}$. We insert such an $\eta$ in the reformulated version of (\ref{regf1}), send $\varepsilon\to 0$, divide then by $t_2-t_1$ and
finally send $t_2\to t_1$, thereby obtaining (relabeling $t_1$ by $t$) 
\begin{align}\label{regf3}
    \begin{split}
	& -\frac{1}{h}\int_\Omega e_1\left(\frac{t}{h}\right)u_0(x) (S_h E_h \psi)(t,x)\,dx+ \int_{\Omega} (\tau_h S_h E_h u)_t\, \psi dx-\mathcal{R}_h(u,\psi)(t)\\
	& + \into \big(\tau_h\mathcal{A}(\cdot,x,u,\nabla u)\big)\big|_t \cdot \nabla ( S_h E_h \psi) dx\\
	& =\,(\le,\,\ge) \into \big(\tau_h\mathcal{B}(\cdot,x,u,\nabla u)\big)\big|_t ( S_h E_h \psi) dx\\
	& \quad + \int_{\Gamma} \big(\tau_h \mathcal{C}(\cdot,x,u)\big)\big|_t (S_h E_h \psi) d \sigma,
    \end{split}
\end{align}
for a.a. $t \in (0,T)$ and for all nonnegative $\psi\in \mathcal{W}$ where
\begin{align*}
    \mathcal{R}_h(u,\psi)(t)=\int_{\Omega_h\setminus \Omega} (\tau_h E_h u)_t\, S_h E_h \psi dx-\int_{\Omega_h\setminus \Omega} (\tau_h S_h E_h u)_t\, E_h \psi dx.
\end{align*}
(\ref{regf3}) is an appropriate regularized version of the weak formulation (\ref{weak_solution}). It will be used in the following section for deriving the basic
truncated energy estimates.

If $p$ does not depend on $t$ and we merely assume that $p\in C(\close)$ the well-known Steklov averages can be used as in the constant exponent case to regularize the weak formulation in time. Indeed, defining for $v \in L^1(Q_T)$ its Steklov average by
\begin{align*}
    v_h(t,x)=\frac{1}{h} \int^{t+h}_t v(s,x)ds,
\end{align*}
we have the following result due to Alkhutov-Zhikov \cite[Lemma 5.1]{Alkhutov-Zhikov-2010} and Zhikov-Pastukhova \cite[Theorem 1.4]{Zhikov-Pastukhova-2010}.
\begin{proposition}
    Let $p$ be a function on $\Omega$ satisfying $p(x)\geq 1$ for all $x \in \Omega$. Then $v_h \to v$ in $L^{p(\cdot)}(Q_{T-\delta})$ as $h \to 0$ for any $v \in L^{p(\cdot)}(Q_{T-\delta})$ and $\delta>0$.
\end{proposition}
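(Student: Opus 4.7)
The plan is to reduce the claim to continuity of translations in the $L^{p(\cdot)}$-modular, exploiting the convexity of $r\mapsto r^{p(x)}$. In the context relevant for the paper one has $p^+:=\max_{\overline{\Omega}}p<\infty$, so convergence in the modular $\rho(w):=\int_{Q_{T-\delta}}|w|^{p(x)}\,dx\,dt$ is equivalent to convergence in the Luxemburg norm, and it is enough to show that $\rho(v_h-v)\to 0$ as $h\to 0$. Writing $v_h(t,x)-v(t,x)=\tfrac{1}{h}\int_0^h\bigl(v(t+s,x)-v(t,x)\bigr)\,ds$ and applying Jensen's inequality to the convex map $r\mapsto r^{p(x)}$ pointwise in $(t,x)$ gives
\begin{equation*}
|v_h(t,x)-v(t,x)|^{p(x)}\le \frac{1}{h}\int_0^h|v(t+s,x)-v(t,x)|^{p(x)}\,ds.
\end{equation*}

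Integrating over $Q_{T-\delta}$ and interchanging the order of integration reduces the problem to showing
\begin{equation*}
g(s):=\int_{Q_{T-\delta}}|v(t+s,x)-v(t,x)|^{p(x)}\,dx\,dt\longrightarrow 0\quad\text{as }s\to 0^+,
\end{equation*}
because then $\rho(v_h-v)\le \tfrac{1}{h}\int_0^h g(s)\,ds\to 0$. The same Jensen argument, applied to $v_h$ itself rather than to the difference, also yields the uniform modular bound $\int_{Q_{T-\delta}}|v_h|^{p(x)}\,dx\,dt\le \int_{Q_T}|v|^{p(x)}\,dx\,dt$, so the Steklov averaging is a modular contraction independently of $h>0$; this uniform estimate is the standard tool for passing from a dense subclass to arbitrary $v$.

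To establish the translation continuity $g(s)\to 0$, I would first handle $\varphi\in C_c(Q_T)$: its uniform continuity, together with $p\le p^+<\infty$ and dominated convergence, gives $\int_{Q_{T-\delta}}|\varphi(\cdot+s,\cdot)-\varphi|^{p(x)}\,dx\,dt\to 0$. For general $v\in L^{p(\cdot)}(Q_T)$, I would invoke density of $C_c(Q_T)$ in $L^{p(\cdot)}(Q_T)$ (classical for bounded continuous exponents; see Diening-Harjulehto-H\"asto-R\r u\v zi\v cka or Kov\'a\v cik-R\'akosn\'ik), pick $\varphi\in C_c(Q_T)$ with $\rho(v-\varphi)<\varepsilon$, and combine the elementary inequality $|a+b+c|^{p(x)}\le 3^{p^+-1}(|a|^{p(x)}+|b|^{p(x)}+|c|^{p(x)})$ with translation invariance of Lebesgue measure to obtain $g(s)\le 3^{p^+-1}(2\varepsilon+o(1))$ as $s\to 0$.

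Putting the pieces together gives $g(s)\to 0$, hence $\rho(v_h-v)\to 0$, which by $p^+<\infty$ is equivalent to norm convergence, proving the proposition. The main obstacle is the translation-continuity step; once density of $C_c(Q_T)$ in $L^{p(\cdot)}(Q_T)$ is available, the rest is a Jensen-plus-three-$\varepsilon$ argument requiring nothing beyond $1\le p\le p^+<\infty$, and in particular no log-H\"older hypothesis on $p$ enters.
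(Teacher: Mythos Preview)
Your argument is correct. The Jensen-plus-translation-continuity route you outline is precisely the standard way to handle Steklov averages in variable exponent spaces when the exponent is $t$-independent; the crucial observation---which you use implicitly---is that the time shift $t\mapsto t+s$ leaves $p(x)$ unchanged, so that $\int_{Q_{T-\delta}}|v(t+s,x)-\varphi(t+s,x)|^{p(x)}\,dx\,dt$ is bounded by the corresponding integral over $Q_T$. One small point: your modular $\rho$ is defined over $Q_{T-\delta}$, but for the three-$\varepsilon$ splitting you need the approximation $\rho(v-\varphi)<\varepsilon$ to hold over a set large enough to contain the shifted region $(s,T-\delta+s)\times\Omega$; taking the density in $L^{p(\cdot)}(Q_T)$ (or in $L^{p(\cdot)}(Q_{T-\delta/2})$ and restricting to $h<\delta/2$) fixes this immediately.

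As for comparison with the paper: the paper does not supply a proof of this proposition at all---it simply attributes the result to Alkhutov--Zhikov \cite{Alkhutov-Zhikov-2010} and Zhikov--Pastukhova \cite{Zhikov-Pastukhova-2010}. Your self-contained argument therefore goes beyond what the paper provides. You are also right that no log-H\"older condition is needed here, and your caveat that $p^+<\infty$ is being used (for the modular--norm equivalence and for density of $C_c$) is appropriate; this hypothesis is implicit throughout the paper but not stated in the proposition itself.
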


%***********************************************************************************************************************************
%***********************************************************************************************************************************
\section{Truncated energy estimates and proof of Theorem \ref{maintheorem}}\label{Section_Energy_estimates}
%***********************************************************************************************************************************
%***********************************************************************************************************************************

We begin this section with suitable truncated energy estimates for subsolutions and supersolutions of (\ref{problem}). First, we state the subsolution case.

\begin{proposition}\label{proposition_energy_subsolution}
    Let the assumptions in (H) and (P) be satisfied and suppose that $u_0 \in L^2(\Omega)$ is essentially bounded above in $\Omega$. Set $q_1^+=\max_{[0,T]\times \bar{\Omega}} q_1$. Then for any weak subsolution $u \in \mathcal{W}$ of (\ref{problem}) and any $\kappa$ fulfilling the condition
    \begin{align*}%\label{condition_kappa}
	\kappa \geq \tilde{\kappa}:= \max \left \{1,\esssup_{\Omega}u_0 \right \},
    \end{align*}
    there holds
    \begin{align*}
	& \esssup_{t \in (0,T_0)} \int_{A_{\kappa}(t)} (u-\kappa)^2 dx+ \int_{0}^{T_0} \int_{A_{\kappa}(t)} |\nabla u|^{p(t,x)} dx dt\\
	& \leq M_1 \int_{0}^{T_0} \int_{A_{\kappa}(t)} u^{q_1(t,x)} dx dt+ M_2 \int_{0}^{T_0} \int_{\Gamma_{\kappa}(t)} u^{q_2(t,x)} d \sigma dt
    \end{align*}
    for every $T_0 \in (0,T]$ with
    \begin{align*}
	A_{\kappa}(t)=\{x\in \Om : u(t,x)>\kappa\}, \qquad \Gamma_{\kappa}(t)=\{x \in \Gamma: u(t,x)>{\kappa}\}, \qquad t \in (0,T_0],
    \end{align*}
    and with positive constants $M_1=M_1(q_1^+,a_3,a_4,a_5,b_0,b_1,b_2)$ as well as $M_2=M_2(a_3,c_0,c_1)$.
\end{proposition}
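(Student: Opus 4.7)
The plan is to test the regularized weak inequality \eqref{regf3} with $\psi = (u - \kappa)_+$, integrate the resulting pointwise-in-$t$ relation against a time cut-off $\omega_{[0,t_0],\eps}$ with $t_0 \in (0, T_0]$ arbitrary, send first $\eps \to 0$ and then $h \to 0$, and finally invoke the growth conditions (H2)--(H4) together with Young's inequality to derive the asserted truncated energy estimate.

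Since $u \in \mathcal{W}$ and $\kappa$ is a positive constant, $(u - \kappa)_+$ belongs to $\mathcal{W}$ and is an admissible test function in \eqref{regf3}. The time-derivative block of \eqref{regf3} (the initial-data term, $\int_\Om (\tau_h S_h E_h u)_t \psi \, dx$, and the remainder $\mathcal{R}_h(u, \psi)$) should produce in the limit $h \to 0$
\[
\tfrac{1}{2}\into (u(t_0) - \kappa)_+^2 \, dx - \tfrac{1}{2}\into (u_0 - \kappa)_+^2 \, dx,
\]
where the second integral vanishes because $\kappa \geq \esssup_{\Om} u_0$. The remaining three terms in \eqref{regf3} converge to their unsmoothed counterparts in the variable-exponent Lebesgue spaces, so the limit inequality reads
\[
\tfrac{1}{2}\into (u(t_0) - \kappa)_+^2 \, dx + \int_0^{t_0} \into \mathcal{A}(t,x,u,\nabla u) \cdot \nabla (u - \kappa)_+ \, dx \, dt \leq \int_0^{t_0}\into \mathcal{B}(t,x,u,\nabla u)(u - \kappa)_+ \, dx \, dt + \int_0^{t_0}\int_\Gamma \mathcal{C}(t,x,u)(u - \kappa)_+ \, d\sigma \, dt.
\]

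Next I apply (H2) on the set $A_\kappa(t)$ to bound the gradient term from below by $a_3|\nabla u|^{p(t,x)} - a_4 u^{q_1(t,x)} - a_5$. The condition $\kappa \geq 1$ yields $u \geq 1$ on $A_\kappa(t)$, hence $u^{q_1(t,x)} \geq 1$ there (analogously $u^{q_2(t,x)} \geq 1$ on $\Gamma_\kappa(t)$), so that the absolute constants $a_5, b_2, c_1$ appearing in (H2)--(H4) are bounded above by the corresponding $u^{q_i}$-integrals. On the right-hand side I use $(u - \kappa)_+ \leq u$ on $A_\kappa(t)$ together with (H3); the dominant cross term $b_0 |\nabla u|^{p(t,x)(q_1 - 1)/q_1} u$ is split via Young's inequality with exponents $q_1/(q_1 - 1)$ and $q_1$ into $\tfrac{a_3}{2}|\nabla u|^{p(t,x)} + C u^{q_1(t,x)}$, the gradient part being absorbed into the left-hand side. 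Condition (H4) treats the boundary integral in parallel fashion. Taking the essential supremum in $t_0 \in (0, T_0]$ of the $L^2$-term delivers the stated inequality, with constants $M_1, M_2$ depending only on the data listed in the statement.

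The main obstacle is the passage $h \to 0$ in the time-derivative block. The smoothing operators $\tau_h$ and $S_h$ do not commute with the truncation $s \mapsto (s - \kappa)_+$, and identifying the limit requires a careful duality argument exploiting the boundedness of $S_h$ on the variable exponent spaces, which is precisely where the log-H\"{o}lder condition (P) enters. The remainder $\mathcal{R}_h(u, \psi)$ is supported on the strip $\Omega_h \setminus \Omega$ of width $h^\gamma$ with $\gamma > 2$, and its vanishing as $h \to 0$ hinges on this geometric choice overcoming the factor $1/h$ carried by the time mollifier. Once this step is in place, the rest of the argument is routine and follows the classical De Giorgi scheme adapted to the variable-exponent setting.
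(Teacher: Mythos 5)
Your structure-condition estimates (Step II of the paper: apply (H2)--(H4), split the cross term in $\mathcal{B}$ with Young's inequality and absorb $\tfrac{a_3}{2}|\nabla u|^{p}$ into the left, use $u\ge 1$ on $A_\kappa(t)$ to absorb the constants, take the essential supremum in $t_0$) match the paper's argument. But Step I contains a genuine gap: the choice of test function.

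You propose $\psi = (u-\kappa)_+$, i.e.\ a truncation of the \emph{unmollified} $u$. With this choice the time-derivative block of \eqref{regf3} reads
\begin{align*}
\int_0^{t_0}\int_\Omega (\tau_h S_h E_h u)_t\,(u-\kappa)_+\,dx\,dt,
\end{align*}
which is \emph{not} a total time derivative: the function being differentiated in $t$ is $\tau_h S_h E_h u$, while the truncation is applied to $u$, and these are two different objects before $h\to 0$. There is no obvious way to evaluate or pass to the limit in this pairing; in particular one cannot integrate by parts to produce $\tfrac12\int_\Omega(u(t_0)-\kappa)_+^2\,dx$, because $u$ has no time derivative in any usable sense (only $\tau_h S_h E_h u$ does). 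The assertion that this block ``should produce in the limit'' the desired $L^2$-difference is exactly the step that has to be justified, and with your test function it cannot be. The ``careful duality argument'' you gesture at does not repair this: the obstruction is not boundedness of $S_h$ but the structure of the time pairing.

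The paper's fix is to test with $\psi = T_\lambda\bigl([\Phi_h(u)]_\kappa^+\bigr)$, where $\Phi_h(u)=\tau_h S_h E_h u$, $[y]_\kappa^+=\max(y-\kappa,0)$ and $T_\lambda(y)=\min(y,\lambda)$. That is, the truncation is applied to the \emph{mollified} solution $\Phi_h(u)$ itself. Then $(\Phi_h u)_t\,T_\lambda\bigl([\Phi_h u]_\kappa^+\bigr)=\partial_t G_\lambda(\Phi_h u)$ for the primitive $G_\lambda$ of $T_\lambda([\,\cdot\,]_\kappa^+)$, so integrating over $(0,t_0)$ produces a clean boundary term. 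One then sends $h\to 0$ (where $\Phi_h(u)\to u$, and $G_\lambda(\Phi_h u)(t_0)\to G_\lambda(u(t_0))$, $G_\lambda(\Phi_h u)(0)\to G_\lambda(u_0)=0$ since $\kappa\ge\esssup u_0$), and finally $\lambda\to\infty$ to recover $\tfrac12(u(t_0)-\kappa)_+^2$. The extra truncation $T_\lambda$ is there to keep the test function bounded, which is used (together with the $\gamma>2$ choice in the extension operator) to show that the remainder $\mathcal{R}_h$ vanishes. You should rework your Step I with this test function; the rest of your argument then goes through as written.
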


\begin{proof}
  {\bf (I) Regularized testing.}    
    Let $u \in \mathcal{W}$ be a weak subsolution of (\ref{problem}) and fix ${\kappa} \geq \tilde{\kappa}$. For $h>0$ we set $\Phi_h(u)= \tau_h S_h E_h u$. Letting $\lambda>0$ we further define the truncations $T_\lambda(y)=\min(y,\lambda)$ and $[y]_\kappa^+:=\max(y-\kappa,0)$, $y\in \R$.  We take in (\ref{regf3})
    the test function $\psi=T_\lambda([\Phi_h(u)]_\kappa^+)$, which belongs to the space $\mathcal{W}$, see Le \cite[Lemma 3.2]{Le-2009}. Integrating over $(0,t_0)$ where $t_0 \in (0,T_0]$ is arbitrarily fixed, we obtain
    \begin{align}\label{TR1}
	\begin{split}
	    & -\int_0^{t_0}\int_\Omega \frac{e_1(t/h)}{h}u_0(x) (S_h E_h T_\lambda([\Phi_h(u)]_\kappa^+))(t,x)\,dx dt\\
	    & \quad + \frac{1}{2}\int_{\Omega} \big(T_\lambda([\Phi_h(u)]_\kappa^+)(t_0,x)\big)^2 dx-\int_0^{t_0}\mathcal{R}_h\big(u,T_\lambda([\Phi_h(u)]_\kappa^+)\big)(t) dt\\
	    & \quad + \int_0^{t_0}\into \big(\tau_h\mathcal{A}(t,x,u,\nabla u)\big) \cdot \nabla ( S_h E_h 
	    T_\lambda( [\Phi_h(u)]_\kappa^+)) dx dt\\
	    & =\,(\le,\,\ge) \int_0^{t_0}\into \big(\tau_h\mathcal{B}(t,x,u,\nabla u)\big) ( S_h E_h 
	    T_\lambda([\Phi_h(u)]_\kappa^+)) dx dt \\
	    &  \quad+ \int_0^{t_0}\int_{\Gamma} \big(\tau_h \mathcal{C}(t,x,u)\big) (S_h E_h 
	    T_\lambda([\Phi_h(u)]_\kappa^+)) d \sigma dt.
	\end{split}
    \end{align}
    We next send $h\to 0$ in (\ref{TR1}) and make use of the approximation properties of the smoothing operators involved.

    Note first that for any $w\in C([0,t_0])$
    \begin{align*}
	\int_0^{t_0}\frac{e_1(t/h)}{h}\, w(t)\,dt \to w(0)\quad \text{as } h\to 0,
    \end{align*}
    and thus it is not difficult to see that the first term in (\ref{TR1}) tends to
    \begin{align*}
	-\int_\Omega u_0(x)T_\lambda((u(t,x)-\kappa)_+)\,dx\big|_{t=0}  =-\int_\Omega u_0(x)T_\lambda((u_0(x)-\kappa)_+)\,dx=0,
    \end{align*}
    due to $\kappa\ge \tilde{\kappa}$. Further, as $h\to 0$ we have
    \begin{align*}
	& \int_{\Omega} \big(T_\lambda([\Phi_h(u)]_\kappa^+)(t_0,x)\big)^2 dx\\  
	& \qquad \qquad \to \int_{\Omega} \big(T_\lambda((u-\kappa)_+(t_0,x))\big)^2 dx,\\
	& \int_0^{t_0}\into \big(\tau_h\mathcal{A}(t,x,u,\nabla u)\big) \cdot \nabla ( S_h E_h T_\lambda([\Phi_h(u)]_\kappa^+))\, dx dt\\  
	& \qquad \qquad \to \int_0^{t_0}\into \mathcal{A}(t,x,u,\nabla u) \cdot \nabla T_\lambda((u-\kappa)_+)\, dx dt,\\
	& \int_0^{t_0}\into \big(\tau_h\mathcal{B}(t,x,u,\nabla u)\big) ( S_h E_h T_\lambda([\Phi_h(u)]_\kappa^+))\, dx dt\\
	& \qquad \qquad \to \int_0^{t_0}\into \mathcal{B}(t,x,u,\nabla u) T_\lambda((u-\kappa)_+)\, dx dt,\\
	& \int_0^{t_0}\int_{\Gamma} \big(\tau_h \mathcal{C}(t,x,u)\big) (S_h E_h T_\lambda([\Phi_h(u)]_\kappa^+))\, d \sigma dt\\
	& \qquad \qquad \to \int_0^{t_0}\int_{\Gamma} \mathcal{C}(t,x,u) T_\lambda((u-\kappa)_+)\, d \sigma dt.
    \end{align*}
    Finally, we claim that
    \begin{equation} \label{TR2}
	\int_0^{t_0}\mathcal{R}_h\big(u,T_\lambda([\Phi_h(u)]_\kappa^+)\big)(t) dt \to 0  \quad \text{as }h\to 0.
    \end{equation}
    To see this, note first that the boundedness of $\psi=T_\lambda([\Phi_h(u)]_\kappa^+)\big)$ and the mapping properties of $E_h$ and $S_h$ imply that
    $E_h \psi$ as well as $S_h E_h \psi$ are bounded 
    uniformly w.r.t.\,$h>0$. Note also that for any $w\in L^1((0,T))$ we have
    \begin{align*}
	\partial_t(\tau_h w)(t)=\frac{1}{h}\,\big(w(t)-(\tau_h w)(t)\big),\quad \mbox{a.a.}\,t\in (0,T).
    \end{align*}
    Thus we get an estimate of the form
    \begin{align*}
	|\mathcal{R}_h(u,\psi)(t)|\le \frac{C}{h}\,\int_{\Omega_h \setminus \Omega}F_h(t,x)\,dx,\quad \mbox{a.a.}\,t\in (0,T),
    \end{align*}
    where
    \begin{align*}
	F_h=|E_h u|+|\tau_h E_h u|+|S_h E_h u|+|\tau_h S_h E_h u|
    \end{align*}
    and the constant $C$ is independent of $h$. By H\"older's inequality, it follows that
    \begin{align} \label{RHest}
	\int_0^{t_0}|\mathcal{R}_h(u,\psi)(t)|\,dt\le \frac{C}{h}\, |\Omega_h \setminus \Omega|^{1/2}\int_0^{t_0} |F_h(t,\cdot)|_{L^2(\R^N)}\,dt
    \end{align}
    Recalling the definition of $\Omega_h$ we have that $|\Omega_h \setminus \Omega|\le \tilde{C} h^{\gamma}$, where
    $\gamma>2$. Since the integral term on the right hand side of (\ref{RHest}) stays bounded for $h\to 0$, it follows that
    $\int_0^{t_0}\mathcal{R}_h(u,\psi)(t)\,dt$ tends to $0$ as $h\to 0$ as claimed in \eqref{TR2}.

    Combining the previous statements and sending the truncation parameter $\lambda\to \infty$ we conclude that for all $t_0\in (0,T_0]$
    \begin{align}\label{PES_11}
	\begin{split}
	    & \frac{1}{2}\int_{\Omega} \big((u-\kappa)_+(t_0,x)\big)^2 dx+\int_0^{t_0}\into \mathcal{A}(t,x,u,\nabla u) \cdot \nabla (u-\kappa)_+\, dx dt\\
	    & \le \int_0^{t_0}\into \mathcal{B}(t,x,u,\nabla u) (u-\kappa)_+\, dx dt +\int_0^{t_0}\int_{\Gamma} \mathcal{C}(t,x,u)(u-\kappa)_+\, d \sigma dt.
	\end{split}
    \end{align}

  {\bf (II) Employing the structure.}
    Now we may apply the structure conditions stated in (H) to the various terms in \eqref{PES_11}. Using (H1) the
    second term on the left-hand side of \eqref{PES_11} can be estimated as
    \begin{align}\label{PES_3}%label nicht entfernen
	\begin{split}
	    & \int_0^{t_0}\into \mathcal{A}(t,x,u,\nabla u) \cdot \nabla (u-\kappa)_+\, dx dt\\
	    & = \int_{0}^{t_0} \int_{A_{\kappa}(t) } \mathcal{A}(t,x,u,\nabla u) \cdot \nabla u dxdt   \\
	    & \geq  \int_{0}^{t_0}\int_{A_\kappa(t)} \left (a_3 |\nabla u|^{p(t,x)}-a_4|u|^{q_1(t,x)}-a_5 \right ) dx dt \\
	    & \geq a_3  \int_{0}^{t_0} \int_{A_\kappa(t)} |\nabla u|^{p(t,x)} dx dt - (a_4+a_5)   \int_{0}^{t_0} \int_{A_\kappa(t)} |u|^{q_1(t,x)} dxdt,
	\end{split}
    \end{align}
    since $u^{q_1(t,x)}>u>1$ in $A_\kappa(t)$. 
    
    Let us next estimate the first term on the right-hand side of \eqref{PES_11} by applying the structure condition (H3) and Young's inequality with $\eps \in
    (0,1]$. This gives
    \begin{align}\label{PES_4}%label nicht entfernen
        \begin{split}
	    & \int_0^{t_0}\into \mathcal{B}(t,x,u,\nabla u) (u-\kappa)_+\, dx dt\\
           % & = \int_{0}^{t_0} \int_{A_{{\kappa}}(t)} \mathcal{B}(t,x,u,\nabla u) (u-{\kappa})dxdt \\
            & \leq  \int_{0}^{t_0}\int_{A_{{\kappa}}(t)} \left [b_0 |\nabla u|^{p(t,x)\frac{q_1(t,x)-1}{q_1(t,x)}}+b_1 |u|^{q_1(t,x)-1}+b_2 \right ]  (u-{\kappa}) dxdt\\
            & \leq b_0  \int_{0}^{t_0} \int_{A_{{\kappa}}(t)} \left [ \eps^{\frac{q_1(t,x)-1}{q_1(t,x)}} |\nabla u|^{p(t,x)\frac{q_1(t,x)-1}{q_1(t,x)}} \eps^{-\frac{q_1(t,x)-1}{q_1(t,x)}}u \right ] dxdt\\
            & \quad +(b_1+b_2)  \int_{0}^{t_0} \int_{A_{\kappa}(t)}|u|^{q_1(t,x)}dxdt\\
            & \leq b_0  \int_{0}^{t_0}\int_{A_{\kappa}(t)} \eps |\nabla u|^{p(t,x)} dxdt + b_0 \int_{0}^{t_0} \int_{A_{\kappa}(t)} \eps^{-(q_1(t,x)-1)} u^{q_1(t,x)}dxdt \\
            & \quad +(b_1+b_2)  \int_{0}^{t_0} \int_{A_{\kappa}(t)}|u|^{q_1(t,x)}dxdt \\
            & \leq \eps b_0 \int_{0}^{t_0} \int_{A_{\kappa}(t)}|\nabla u|^{p(t,x)} dxdt\\
            & \quad + \left ( b_0 \eps^{-(q_1^+-1)}+b_1+b_2\right ) \int_{0}^{t_0} \int_{A_{\kappa}(t)} u^{q_1(t,x)} dxdt .
        \end{split}
    \end{align}
    Finally, we use assumption (H4) to estimate the boundary term through
    \begin{align}\label{PES_5}
        \begin{split}
	    & \int_0^{t_0}\int_{\Gamma} \mathcal{C}(t,x,u)(u-\kappa)_+\, d \sigma dt\\
           % & = \int_{0}^{t_0}\int_{\Gamma_{\kappa}(t)} \mathcal{C}(t,x,u) (u-{\kappa}) d \sigma dt \\
            & \leq  \int_{0}^{t_0} \int_{\Gamma_{\kappa}(t)} (c_0 |u|^{q_2(t,x)-1}+c_1)(u-{\kappa}) d \sigma dt\\
            & \leq (c_0+c_1)  \int_{0}^{t_0} \int_{\Gamma_{\kappa}(t)} u^{q_2(t,x)} d \sigma dt.
        \end{split}
    \end{align}
    Combining (\ref{PES_11})--(\ref{PES_5}) results in
    \begin{align}\label{PES_6}
    \begin{split}
        & \frac{1}{2} \into (u(t_0,x)-{\kappa})_+^2 dx +\frac{a_3}{2} \int_{0}^{t_0} \int_{A_{\kappa}(t)} |\nabla u|^{p(t,x)} dx dt\\
        &\leq \tilde{M}_1  \int_{0}^{t_0} \int_{A_{\kappa}(t)} u^{q_1(t,x)}dxdt + \tilde{M}_2  \int_{0}^{t_0} \int_{\Gamma_{\kappa}(t)} u^{q_2(t,x)}d \sigma dt,
    \end{split}
    \end{align}
    for every $t_0 \in (0,T_0]$, whereby $\eps$ was chosen such that $\eps = \min\left(1,\frac{a_3}{2b_0}\right)$ and $\tilde{M}_1=\tilde{M}_1\left(q_1^+,a_3,a_4,a_5,b_0,b_1,b_2\right)$  as well as $\tilde{M}_2=\tilde{M}_2(c_0,c_1)$.

   Since (\ref{PES_6}) holds for all $t_0\in (0,T_0]$ and the second term on the left-hand side of (\ref{PES_6}) is nonnegative,
   the assertion of the proposition follows.   
   %   we conclude
    %\begin{align}\label{PES_7}
    %\begin{split}
      %  &  \frac{1}{2} \esssup_{t \in (0,T_0)} \into (u(t,x)-{\kappa})_+^2 dx \\
        %&\leq \tilde{M}_1 \int_0^{T_0}  \int_{A_{\kappa}(t)} u^{q_1(t,x)}dx dt + \tilde{M}_2 \int_0^{T_0} \int_{\Gamma_{\kappa}(t)} u^{q_2(t,x)}d \sigma dt.
    %\end{split}
    %\end{align}
   % On the other side (\ref{PES_6}) gives for $t_0=T_0$
   % \begin{align}\label{PES_8}
   % \begin{split}
     %   & \frac{a_3}{2} \int_0^{T_0} \int_{A_{\kappa}(t)} |\nabla (u-\kappa)|^{p(t,x)} dxdt\\
       % &\leq \tilde{M}_1  \int_0^{T_0} \int_{A_{\kappa}(t)} u^{q_1(t,x)}dxdt + \tilde{M}_2  \int_0^{T_0}\int_{\Gamma_{\kappa}(t)} u^{q_2(t,x)}d \sigma dt.
    %\end{split}
   % \end{align}
   % Dividing (\ref{PES_7}) by $\frac{1}{2} $ and (\ref{PES_8}) by $\frac{a_3}{2}$ and adding these new estimates
    %gives the assertion of the proposition.
\end{proof}

Similar to Proposition \ref{proposition_energy_subsolution} we may formulate a corresponding result for supersolutions of (\ref{problem}).

\begin{proposition}\label{proposition_energy_supersolution}
    Let the assumptions in (H) and (P) be satisfied and suppose that $u_0 \in L^2(\Omega)$ is essentially bounded below in $\Omega$. Then for any weak supersolution $u \in \mathcal{W}$ of (\ref{problem}) and any $\kappa$ fulfilling the condition
    \begin{align*}
	\kappa \geq \hat{\kappa}:= \max \left \{1,-\essinf_{\Omega}u_0 \right \},
    \end{align*}
    there holds
    \begin{align*}
	& \esssup_{t \in (0,T_0)} \int_{\tilde{A}_{\kappa}(t)} (u+\kappa)^2 dx+ \int_{0}^{T_0} \int_{\tilde{A}_{\kappa}(t)} |\nabla u|^{p(t,x)} dx dt\\
	& \leq M_1 \int_{0}^{T_0} \int_{\tilde{A}_{\kappa}(t)} (-u)^{q_1(t,x)} dx dt+ M_2 \int_{0}^{T_0} \int_{\tilde{\Gamma}_{\kappa}(t)} (-u)^{q_2(t,x)} d \sigma dt
    \end{align*}
    for every $T_0 \in (0,T]$ with
    \begin{align*}
	\tilde{A}_{\kappa}(t)=\{x\in \Om : -u(t,x)>\kappa\}, \qquad \tilde{\Gamma}_{\kappa}(t)=\{x \in \Gamma: -u(t,x)>{\kappa}\}, \quad t \in (0,T_0],
    \end{align*}
    and with the same constants $M_1$ and $M_2$ as in Proposition \ref{proposition_energy_subsolution}.
\end{proposition}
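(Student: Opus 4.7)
The plan is to reduce the supersolution case to the subsolution case (Proposition \ref{proposition_energy_subsolution}) via the reflection $v:=-u$, thus avoiding a complete repetition of the regularized testing procedure.

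First I would verify that $v$ is a weak subsolution of a transformed problem with the same structural form. Setting $u=-v$ in the weak formulation \eqref{weak_solution} with the inequality sign $\ge$ (supersolution) and multiplying the whole identity by $-1$ (which reverses the inequality to $\le$), one sees that $v$ satisfies \eqref{weak_solution} with $\le$, data $v_0=-u_0$, and with the nonlinearities
\begin{align*}
 \tilde{\mathcal{A}}(t,x,s,\xi) &=-\mathcal{A}(t,x,-s,-\xi),\\
 \tilde{\mathcal{B}}(t,x,s,\xi) &=-\mathcal{B}(t,x,-s,-\xi),\\
 \tilde{\mathcal{C}}(t,x,s) &=-\mathcal{C}(t,x,-s).
\end{align*}
The second step is to observe that $\tilde{\mathcal{A}},\tilde{\mathcal{B}},\tilde{\mathcal{C}}$ are Carath\'eodory functions fulfilling exactly hypotheses (H1)--(H4) with the \emph{same} constants $a_i,b_j,c_l$, since those bounds depend on $s$ and $\xi$ only through $|s|$ and $|\xi|$. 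In particular, the coercivity condition (H2) for $\tilde{\mathcal{A}}$ follows from
$\tilde{\mathcal{A}}(t,x,s,\xi)\cdot\xi=\mathcal{A}(t,x,-s,-\xi)\cdot(-\xi)\ge a_3|\xi|^{p(t,x)}-a_4|s|^{q_1(t,x)}-a_5$.

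In the third step I would apply Proposition \ref{proposition_energy_subsolution} to $v$. Since the essential supremum of $v_0=-u_0$ equals $-\essinf_\Omega u_0$, the admissible threshold is
\begin{align*}
\kappa\geq \max\{1,\esssup_\Omega v_0\}=\max\{1,-\essinf_\Omega u_0\}=\hat{\kappa},
\end{align*}
which matches the hypothesis of the proposition to be proved. The super-level sets of $v$ are precisely $\tilde{A}_\kappa(t)$ and $\tilde{\Gamma}_\kappa(t)$, and on these sets $v^{q_i(t,x)}=(-u)^{q_i(t,x)}$, while $(v-\kappa)^2=(u+\kappa)^2$ and $|\nabla v|=|\nabla u|$. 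Substituting these identifications into the conclusion of Proposition \ref{proposition_energy_subsolution} yields exactly the stated estimate, with the same constants $M_1$ and $M_2$.

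There is no real obstacle here; the only point that requires a little care is the sign tracking when passing from the supersolution inequality for $u$ to the subsolution inequality for $v$, and the verification that the symmetry of the growth bounds (H1)--(H4) under $(s,\xi)\mapsto(-s,-\xi)$ really preserves all constants. Since both checks are straightforward, the reduction is rigorous and the argument is considerably shorter than redoing the regularization, energy testing, and structural estimates from scratch.
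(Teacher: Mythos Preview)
Your proposal is correct and follows essentially the same approach as the paper, which simply states that the proof is analogous to the subsolution case via the substitution $u\mapsto -u$, $u_0\mapsto -u_0$. Your version is more explicit in that you package the reflection as a genuine application of Proposition~\ref{proposition_energy_subsolution} to $v=-u$ (after checking that $\tilde{\mathcal{A}},\tilde{\mathcal{B}},\tilde{\mathcal{C}}$ satisfy (H1)--(H4) with identical constants), whereas the paper phrases it as repeating ``the same line of arguments''; the content is the same.
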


\begin{proof}
    The proof is analogous to the subsolution case. Replacing $u$ by $-u$ and $u_0$ by $-u_0$, the same line
    of arguments yields the asserted estimate.
\end{proof}

Now we are in the position to prove Theorem \ref{maintheorem}.

\begin{proof}[Proof of Theorem \ref{maintheorem}]
  Our proof is divided into several parts.

%***********************************************************************************************************************************
%******************Step 1*****************************************************************************************************************
%***********************************************************************************************************************************

    {\bf (I) Partition of unity.}
    Since $\bar{\Omega}$ is compact, for any $R>0$ there exists an open cover $\{B_j(R)\}_{j=1, \ldots, m}$ of balls
    $B_j:=B_j(R)$ with radius $R>0$ such that $\bar{\Omega}\subset \bigcup_{j=1}^m B_j(R)$. We further decompose
    the time interval as 
    \begin{align*}
        & [0,T]=\bigcup_{i=1}^l J_i \quad \text{with } J_i:=J_i(\delta)=[\delta(i-1),\delta i],
    \end{align*}
    where $l \delta=T$.
    
    Recall that
    \begin{align*}
        & p(t,x) \leq q_1(t,x)<p^*(t,x), \quad (t,x)\in [0,T] \times \overline{\Om}=\overline{Q}_T,\\
        & p(t,x)\leq q_2(t,x)<p_*(t,x), \quad (t,x) \in [0,T]\times \Gamma=\overline{\Gamma}_T.
    \end{align*}
    Clearly, since $p, q_1 \in C(\overline{Q}_T)$ and $q_2 \in
    C(\overline{\Gamma}_T)$ these functions are uniformly
    continuous on $\overline{Q}_T$ and $\overline{\Gamma}_T$. Hence,
    we may take $R>0$ and $\delta>0$ small enough such that
    \begin{align*}
        p^+_{i,j} \leq q_{1,i,j}^+ <(p^-_{i,j})^*, \qquad p^+_{i,j} \leq q_{2,i,j}^+<(p^-_{i,j})_{*},
    \end{align*}
    for $i=1, \ldots, l$ and $j=1, \ldots, m$ whereby
    \begin{align*}
        \begin{split}
            & p^+_{i,j}=\max_{(t,x) \in J_i \times (\overline{B_j} \cap \overline{\Omega})} p(t,x),
            \quad q_{1,i,j}^+=\max_{(t,x) \in J_i \times (\overline{B_j}\cap \overline{\Om})}q_1(t,x),\\
            & p^-_{i,j}=\min_{(t,x) \in J_i \times (\overline{B_j}\cap \overline{\Om})} p(t,x),
            \quad q_{2,i,j}^+=\max_{(t,x) \in J_i \times (\overline{B_j} \cap \Gamma)} q_2(t,x).
        \end{split}
    \end{align*}
    Recall that, for $s\in[1,\infty)$,
    \begin{align*}
	s^*= s\frac{N+2}{N}, \qquad s_*= s\frac{N+2}{N}-\frac{2}{N}.
    \end{align*}    
    Now we choose a partition of unity $\{\xi_j\}_{j=1}^m \subset C^{\infty}_0 (\RN)$ with respect to the open cover
    $\{B_j(R)\}_{i=j,\ldots, m}$ (see e.g.\,Rudin \cite[Theorem 6.20]{Rudin-1973}) which means
    \begin{align*}
        \supp \xi_j \subset B_j, \quad 0 \leq \xi_j \leq 1, \quad j=1,\ldots,m, \quad \text{and} \quad \sum_{j=1}^m \xi_j=1 \text{ on } \overline{\Omega}.
    \end{align*}
    Moreover, we denote by $L$ a positive constant satisfying
    \begin{align}\label{I_1}
        |\nabla \xi_j| \leq L,\quad j=1,\ldots,m.
    \end{align}
    Without loss of generality we may assume that $L>1$.

%***********************************************************************************************************************************
%******************Step 2*****************************************************************************************************************
%***********************************************************************************************************************************

    {\bf (II) Iteration variables and basic estimates.}
    First, we set
    \begin{align*}
        \kappa_n=\kappa \left (2 -  \frac{1}{2^{n}} \right ), \quad n=0,1,2, \ldots ,
    \end{align*}
    with $\kappa \geq \max \left \{1,\esssup_{\Omega}u_0 \right \}$ specified later and put
    \begin{align*}
        Z_n:=\int_0^\delta \int_{A_{\kappa_n}(t)} (u-\kappa_n)^{q_1(t,x)}dxdt,
        \qquad \tilde{Z}_n:= \int_0^\delta \int_{\Gamma_{\kappa_n}(t)} (u-\kappa_n)^{q_2(t,x)} d \sigma dt.
    \end{align*}
    Thanks to
    \begin{align*}
        \begin{split}
            Z_n
            & \geq \int_0^\delta \int_{A_{\kappa_{n+1}}(t)} (u-\kappa_n)^{q_1(t,x)} dx dt\\
            & \geq \int_0^\delta \int_{A_{\kappa_{n+1}}(t)} u^{q_1(t,x)} \left (1-\frac{\kappa_n}{\kappa_{n+1}} \right )^{q_1(t,x)}dx dt\\
            & \geq \int_0^\delta \int_{A_{\kappa_{n+1}}(t)} \frac{1}{2^{q_1(t,x) (n+2)}} u^{q_1(t,x)}dx,
        \end{split}
    \end{align*}
    we have
    \begin{align}\label{II_1}
        \int_0^\delta \int_{A_{\kappa_{n+1}}(t)} u^{q_1(t,x)} dx dt  \leq 2^{q_{1}^+(n+2)} Z_n.
    \end{align}
    Analogously, one proves
    \begin{align}\label{II_2}
        \int_0^\delta \int_{\Gamma_{\kappa_{n+1}}(t)} u^{q_2(t,x)} d \sigma dt \leq 2^{q_2^+(n+2)}
        \tilde{Z}_n.
    \end{align}
    Due to Proposition \ref{proposition_energy_subsolution} (replacing $\kappa$ by $\kappa_{n+1} \geq \max \left \{1,\esssup_{\Omega}u_0 \right \}$ and $T_0$ by $\delta$)
    along with (\ref{II_1}) and (\ref{II_2}) we obtain
    \begin{align}\label{II_3}
        \begin{split}
            & \esssup_{t \in (0,\delta)} \int_{A_{\kappa_{n+1}}(t)} (u-\kappa_{n+1})^2 dx+ \int_{0}^{\delta} \int_{A_{\kappa_{n+1}}(t)} |\nabla (u-\kappa_{n+1})|^{p(t,x)} dx dt\\
            & \leq M_3 M_4^n (Z_n + \tilde{Z}_n),
        \end{split}
    \end{align}
    where $M_3=\max\left (M_1 2^{2q_1^+},M_2 2^{2q_2^+}\right )$ and $M_4=\max \left (2^{q_1^+}, 2^{q_2^+} \right)$.
    Additionally, it holds
    \begin{align}\label{II_4}
        \begin{split}
            \int_0^\delta |A_{\kappa_{n+1}}(t)|dt
            & \leq \int_0^\delta \int_{A_{\kappa_{n+1}}(t)} \left (\frac{u-\kappa_n}{\kappa_{n+1}-\kappa_n}\right )^{q_1(t,x)} dx dt\\
            & \leq \int_0^\delta \int_{A_{\kappa_{n}}(t)} \frac{2^{q_1(t,x)(n+1)}}{\kappa^{q_1(t,x)}} (u-\kappa_n)^{q_1(t,x)}dx dt\\
            & \leq \frac{2^{q_1^+(n+1)}}{\kappa^{q_1^-}} \int_0^\delta \int_{A_{\kappa_{n}}(t)} (u-\kappa_n)^{q_1(t,x)}dx dt \\
            & = \frac{2^{q_1^+(n+1)}}{\kappa^{q_1^-}} Z_n.
        \end{split}
    \end{align}
    Furthermore, we set
    \begin{align}\label{II_5}
	Y_n:=Z_n+\tilde{Z}_n.
    \end{align}

%***********************************************************************************************************************************
%******************Step 3*****************************************************************************************************************
%***********************************************************************************************************************************

    {\bf (III) Estimating the gradient term in (\ref{II_3}) from below.}
    With the aid of the partition of unity from step (I) it follows
    \begin{align}\label{III_1}
        \begin{split}
            & \int_0^\delta \int_{A_{\kappa_{n+1}}(t)} |\nabla (u-\kappa_{n+1})|^{p(t,x)}dxdt \\
            & =  \int_0^\delta \int_{A_{\kappa_{n+1}}(t)} |\nabla (u-\kappa_{n+1})|^{p(t,x)} \sum_{j=1}^m \xi_j dxdt\\
            & \geq  \sum_{j=1}^m  \int_0^\delta\int_{A_{\kappa_{n+1}}(t)} \left( |\nabla (u-\kappa_{n+1})|^{p^-_{1,j}}-1\right)  \xi_j dxdt\\
            & \geq \left(\sum_{j=1}^m \int_0^\delta \int_{A_{\kappa_{n+1}}(t)} |\nabla (u-\kappa_{n+1})|^{p^-_{1,j}} \xi_j^{p^-_{1,j}} dxdt\right)\\
            & \qquad -\left(m \int_0^\delta |A_{\kappa_{n+1}}(t)|dt\right),
        \end{split}
    \end{align}
    since $\xi_j  \geq \xi_j^{p^-_{1,j}}$. In particular, from (\ref{III_1}) we conclude
    \begin{align}\label{III_2}
        \begin{split}
            & \int_0^\delta \int_{A_{\kappa_{n+1}}(t)} |\nabla (u-\kappa_{n+1})|^{p(t,x)}dx dt\\
            & \geq  \int_0^\delta \int_{A_{\kappa_{n+1}}(t)} |\nabla (u-\kappa_{n+1})|^{p^-_{1,j}} \xi_j^{p^-_{1,j}} dxdt - m \int_0^\delta |A_{\kappa_{n+1}}(t)|dt,
        \end{split}
    \end{align}
    for all $j=1, \ldots, m$.
    Combining (\ref{III_2}) and (\ref{II_3}) and using (\ref{II_4}) yields
    \begin{align}\label{III_3}
        \begin{split}
             & \esssup_{t \in (0,\delta)} \int_{A_{\kappa_{n+1}}(t)} (u-\kappa_{n+1})^2 dx
             + \int_0^\delta\int_{A_{\kappa_{n+1}}(t)} |\nabla (u-\kappa_{n+1})|^{p^-_{1,j}} \xi_j^{p^-_{1,j}} dxdt\\
             & \leq M_5 M_4^n (Z_n + \tilde{Z}_n)
        \end{split}
    \end{align}
    for any $j=1,\ldots, m$ with the positive constant $M_5=M_3+m2^{q_1^+}$. Recall that $M_4=\max \left (2^{q_1^+}, 2^{q_2^+} \right)$ (see step (II)).

%***********************************************************************************************************************************
%******************Step 4*****************************************************************************************************************
%***********************************************************************************************************************************

    {\bf (IV) Estimating the term $Z_{n+1}$.}
    Let us now estimate $Z_{n+1}$ from above using the partition of
    unity. First, we have
    \begin{align}\label{IV_1}
        \begin{split}
            Z_{n+1} &=\int_0^\delta \int_{A_{\kappa_{n+1}}} (u-\kappa_{n+1})^{q_1(t,x)}dxdt\\
            & =\int_0^\delta\int_{A_{\kappa_{n+1}}(t)} (u-\kappa_{n+1})^{q_1(t,x)} \left ( \sum_{j=1}^m \xi_j\right )^{q_1^+}dxdt \\
            & \leq m^{q_1^+} \sum_{j=1}^m \int_0^\delta \int_{A_{\kappa_{n+1}}(t)} (u-\kappa_{n+1})^{q_1(t,x)} \xi_j^{q_{1,1,j}^+} dxdt \\
            & \leq m^{q_1^+} \sum_{j=1}^m \left [\int_0^\delta \int_{A_{\kappa_{n+1}}(t)} (u-\kappa_{n+1})^{q_{1,1,j}^+}\xi_j^{q_{1,1,j}^+}dxdt \right.\\
            & \qquad \qquad \qquad  \left. + \int_0^\delta \int_{A_{\kappa_{n+1}}(t)} (u-\kappa_{n+1})^{q_{1,1,j}^-}\xi_j^{q_{1,1,j}^-} dx dt \right ],
        \end{split}
    \end{align}
    where $q_{1,1,j}^-=\min_{(t,x) \in J_1 \times (\overline{B_j}\cap \overline{\Om})}q_1(t,x)$.
    Note that $p^-_{1,j} \leq q^-_{1,1,j} \leq q^+_{1,1,j} <(p^-_{1,j})^*$ for all $j=1,\ldots, m$.

    Now, we fix $j\in \{1,\ldots, m\}$ and assume that $r \in \{q^-_{1,1,j},q^+_{1,1,j}\}$.
    Then $p^-_{1,j} \leq r <(p^-_{1,j})^*$ and $r \leq q^+$, where $q^+=\max(q^+_1,q^+_2)$.

    By combining H\"{o}lder's inequality with Proposition \ref{energy_estimate}(1) we obtain
    \begin{align}\label{IV_2}
        \begin{split}
            & \int_0^\delta \into (u-\kappa_{n+1})_+^{r}\xi_j^{r}dxdt\\
            & \leq \int_0^\delta \left [\left (\into (u-\kappa_{n+1})_+^{(p^-_{1,j})^*} \xi_j^{(p^-_{1,j})^*}dx \right)^{\frac{r}{(p^-_{1,j})^*}}
            |A_{\kappa_{n+1}}(t)|^{1-\frac{r}{(p^-_{1,j})^*}}\right ] dt  \\
            & \leq \left [\int_0^\delta \into (u-\kappa_{n+1})_+^{(p^-_{1,j})^*} \xi_j^{(p^-_{1,j})^*}dx dt\right ]^{\frac{r}{(p^-_{1,j})^*}}
            \left [\int_0^\delta |A_{\kappa_{n+1}}(t)|dt\right ]^{1-\frac{r}{(p^-_{1,j})^*}} \\
            & \leq  \tilde{C}^{q^+} \left ( \int_0^{\delta} \int_{\Omega} \left|\nabla[(u-\kappa_{n+1})_+\xi_j]\right|^{p^-_{1,j}} dx dt\right.\\
            & \qquad \qquad  \left.+\int_0^{\delta} \int_{\Omega} (u-\kappa_{n+1})_+^{p^-_{1,j}}\xi_j^{p^-_{1,j}} dx dt \right )^{\frac{r}{(p^-_{1,j})^*}}\\
	        & \qquad \qquad \times \left (\esssup_{0<t<\delta} \int_\Omega (u-\kappa_{n+1})_+^2 dx \right )^{\frac{r}{N+2}}\left [\int_0^\delta |A_{\kappa_{n+1}}(t)|dt\right ]^{1-\frac{r}{(p^-_{1,j})^*}},
	\end{split}
    \end{align}
    where $\tilde{C}=\max(1,C_\Omega(p^-_{1,1},N), \ldots, C_\Omega(p^-_{1,m},N))$ with $C_\Omega(p^-_{1,j},N)$ being the
    constant of the energy estimate given in Proposition
    \ref{energy_estimate}(1), $j=1,\ldots,m$. Thus $\tilde{C}$ is independent of $j$. Furthermore, the right-hand side of \eqref{IV_2} can be estimated to obtain
    \begin{align}\label{IV_2b}
        \begin{split}
            & \int_0^\delta \into (u-\kappa_{n+1})_+^{r}\xi_j^{r}dxdt\\
            & \leq  M_6 \left ( \int_0^{\delta} \int_{A_{\kappa_{n+1}}(t)} |\nabla(u-\kappa_{n+1})|^{p^-_{1,j}}\xi_j^{p^-_{1,j}} dx dt\right. \\
            & \qquad \qquad \left. +\int_0^{\delta} \int_{A_{\kappa_{n+1}}(t)} u^{q_{1}(t,x)} dx dt \right.\\
            & \qquad \qquad \left.+\esssup_{0<t<\delta} \int_\Omega (u-\kappa_{n+1})_+^2 dx \vphantom{\int_0^\delta} \right )^{r \left(\frac{1}{p^-_{1,j}}\frac{N}{N+2}+\frac{1}{N+2}\right) }\\
            & \qquad \qquad \times \left [\int_0^\delta |A_{\kappa_{n+1}}(t)|dt\right ]^{1-\frac{r}{(p^-_{1,j})^*}}
	\end{split}
    \end{align}
    with $M_6=M_6(p^+, q^+,\tilde{C},L)$. Applying (\ref{III_3}), (\ref{II_1}), (\ref{I_1}), (\ref{II_4}) and (\ref{II_5}) to the right-hand side of (\ref{IV_2b}) yields
    \begin{align}\label{IV_3}
        \begin{split}
            & \int_0^\delta \into (u-\kappa_{n+1})_+^{r}\xi_j^{r}dxdt\\
            & \leq  M_6 \left ( M_5M_4^n(Z_n+\tilde{Z}_n)+2^{q_1^+(n+2)}Z_n \right )^{r \left(\frac{1}{p^-_{1,j}}\frac{N}{N+2}+\frac{1}{N+2}\right) }\\
            & \qquad \times \left [\frac{2^{q_1^+(n+1)}}{\kappa^{q_1^-}} Z_n\right ]^{1-\frac{r}{(p^-_{1,j})^*}}\\
            & \leq M_6 2^{q^+} \left ( M_5^{q^+ }\left(M_4^{q^+ } \right)^n \left( Y_n + Y_n^{q^+} \right) + \left( 2^{q_1^+q^+ } \right)^{n+2} \left( Y_n+Y_n^{q^+ }\right)\right)\\
            & \qquad \times \left [\frac{2^{q_1^+(n+1)}}{\kappa^{q_1^-}} Z_n\right ]^{1-\frac{r}{(p^-_{1,j})^*}}\\
            & \leq M_7 M_8^n \left( Y_n+Y_n^{q^+}\right) \left [\frac{2^{q_1^+(n+1)}}{\kappa^{q_1^-}} Z_n\right ]^{1-\frac{r}{(p^-_{1,j})^*}},
	\end{split}
    \end{align}
    where we have used the estimate
    \begin{align*}
	r \left(\frac{1}{p^-_{1,j}}\frac{N}{N+2}+\frac{1}{N+2}\right) \leq q^+\frac{N+1}{N+2}\leq q^+.
    \end{align*}
    Set $\eta=\max\left(\frac{q^+_{1,1,1}}{(p^-_{1,1})^*}, \ldots, \frac{q^+_{1,1,m}}{(p^-_{1,m})^*} \right)$. Then, we can estimate the last term on the right-hand side of (\ref{IV_3}) as follows
    \begin{align} \label{IV_4}
        \begin{split}
            & \left [\frac{2^{q_1^+(n+1)}}{\kappa^{q_1^-}} Z_n\right ]^{1-\frac{r}{(p^-_{1,j})^*}} \leq 2^{q_1^+(n+1)} \left (\frac{1}{\kappa^{q_1^-}} \right)^{1-\eta}  (Y_n+Y_n^{1-\eta})
        \end{split}
    \end{align}
    for $r \in \{q^+_{1,1,j},q^-_{1,1,j}\}$.

    Now we may apply (\ref{IV_3}) and (\ref{IV_4}) with $r=q_{1,1,j}^+$ and $r=q_{1,1,j}^-$, respectively, to (\ref{IV_1}) which results in
    \begin{align}\label{IV_5}
        \begin{split}
            Z_{n+1}
            & \leq m^{q_1^+} \sum_{j=1}^m \left [\int_0^\delta \int_{A_{\kappa_{n+1}}(t)} (u-\kappa_{n+1})^{q_{1,1,j}^+}\xi_j^{q_{1,1,j}^+}dxdt\right. \\
            & \qquad \qquad \qquad \left.+ \int_0^\delta \int_{A_{\kappa_{n+1}}(t)} (u-\kappa_{n+1})^{q_{1,1,j}^-}\xi_j^{q_{1,1,j}^-} dx dt \right ]\\
            & \leq m^{q_1^+} \sum_{j=1}^m \left [2M_7M_8^n \left(Y_n+Y_n^{q^+} \right) 2^{q_1^+(n+1)} \frac{1}{\kappa^{q_1^-(1-\eta)}} (Y_n+Y_n^{1-\eta}) \right ]\\
            & \leq M_9 M_{10}^n \frac{1}{\kappa^{q_1^- (1-\eta)}} \left
            (Y_n^2+Y_n^{2-\eta}+Y_n^{1+q^+ }+Y_n^{1+q^+ -\eta} \right )
        \end{split}
    \end{align}
    with positive constants $M_9$ and $M_{10}$ depending on the data.

%***********************************************************************************************************************************
%******************Step 5*****************************************************************************************************************
%***********************************************************************************************************************************

    {\bf (V) Estimating the term $\tilde{Z}_{n+1}$.}
    Similar to step (V) we are going to estimate the term $\tilde{Z}_{n+1}$. First, we have
    \begin{align}\label{V_1}
        \begin{split}
            \tilde{Z}_{n+1} &=\int_0^\delta \int_{\Gamma_{\kappa_{n+1}}} (u-\kappa_{n+1})^{q_2(t,x)}d\sigma dt\\
            & =\int_0^\delta\int_{\Gamma_{\kappa_{n+1}}(t)} (u-\kappa_{n+1})^{q_2(t,x)} \left ( \sum_{j=1}^m \xi_j\right )^{q_2^+}d\sigma dt \\
            & \leq m^{q_2^+} \sum_{j=1}^m \int_0^\delta \int_{\Gamma_{\kappa_{n+1}}(t)} (u-\kappa_{n+1})^{q_2(t,x)} \xi_j^{q_2(t,x)} d\sigma dt \\
            & \leq m^{q_2^+} \sum_{j=1}^m \left [\int_0^\delta \int_{\Gamma_{\kappa_{n+1}}(t)} (u-\kappa_{n+1})^{q_{2,1,j}^+}\xi_j^{q_{2,1,j}^+}d \sigma dt \right.\\
            & \qquad \qquad \qquad \left.+ \int_0^\delta \int_{\Gamma_{\kappa_{n+1}}(t)} (u-\kappa_{n+1})^{q_{2,1,j}^-}\xi_j^{q_{2,1,j}^-} d\sigma dt \right ],
        \end{split}
    \end{align}
    with $q_{2,1,j}^-=\min_{(t,x) \in J_1 \times (\overline{B_j}\cap \Gamma)}q_2(t,x)$. Recall that $p^-_{1,j} \leq q^-_{2,1,j} \leq q^+_{2,1,j}<(p^-_{1,i})_*$ for $j=1,\ldots,m$.

    Then, we fix an index $j\in \{1,\ldots,m\}$ and assume that $r\in \{q_{2,1,j}^-,q_{2,1,j}^+\}$ meaning that  $p^-_{1,j} \leq r <(p^-_{1,j})_*$ and $r \leq q^+$.
    Defining a number $s=s_{1,j}(r)$ through
    \begin{align*}
	s_*=\frac{r+(p_{1,j}^-)_*}{2},
    \end{align*}
    we have that $s<p^-_{1,j} \leq r<s_*<(p^-_{1,j})_*$. Taking into account Proposition \ref{energy_estimate}(2) and twice H\"{o}lder's inequality we obtain
    \begin{align}\label{V_2}
        \begin{split}
            & \int_0^\delta \int_{\Gamma} ((u-\kappa_{n+1})_+ \xi_j)^{r}d\sigma dt \\
            & \leq \hat{C}^{q^+} \left (\int_0^\delta \into |\nabla [(u-\kappa_{n+1})_+\xi_j]|^{s}dxdt\right.\\
            & \qquad \qquad \left. +\int_0^\delta \into|(u-\kappa_{n+1})_+\xi_j|^{s} dx dt\right )\\
            & \qquad \qquad \times \left (\esssup_{0<t<\delta} \int_\Omega (u-\kappa_{n+1})_+^2 dx \right )^{\frac{s-1}{N}} \\
            & \leq \hat{C}^{q^+} \left [ \left ( \int_0^\delta \into |\nabla [(u-\kappa_{n+1})_+\xi_j]|^{p^-_{1,j}}dxdt\right)^{\frac{s}{p^-_{1,j}}} \right.\\
            & \qquad \qquad  \times \left ( \int_0^\delta|A_{\kappa_{n+1}}(t)|dt \right)^{1-\frac{s}{p^-_{1,j}}} \\
            & \qquad \qquad + \left(\int_0^\delta \into |(u-\kappa_{n+1})_+\xi_j|^{p^-_{1,j}}dxdt \right)^{\frac{s}{p^-_{1,j}}}\\
            & \qquad \qquad \left. \times \left ( \int_0^\delta|A_{\kappa_{n+1}}(t)|dt \right)^{1-\frac{s}{p^-_{1,j}}} \right]\\
            & \qquad \qquad  \times \left (\esssup_{0<t<\delta} \int_\Omega (u-\kappa_{n+1})_+^2 dx \right )^{\frac{s-1}{N}},
        \end{split}
    \end{align}
    where $\hat{C}=\max(1,C_\Gamma(p^-_{1,1},N), \ldots, C_\Gamma(p^-_{1,m},N))$ with $C_\Gamma(p^-_{1,j},N)$ being the constant of the energy estimate given in Proposition
    \ref{energy_estimate}(2) for $j=1,\ldots,m$ ensuring that $\hat{C}$ is independent of $j$.
    The right-hand side of (\ref{V_2}) can be estimated through
    \begin{align}\label{V_3}
        \begin{split}
            & \int_0^\delta \int_{\Gamma} ((u-\kappa_{n+1})_+ \xi_j)^{r}d\sigma dt \\
            &\leq M_{11} \left ( \int_0^\delta \into |\nabla (u-\kappa_{n+1})_+|^{p^-_{1,j}}\xi_j^{p^-_{1,j}}dxdt +\int_0^\delta \into u^{q_1(t,x)}dxdt \right.\\
            & \qquad \qquad \left. + \esssup_{0<t<\delta} \int_\Omega (u-\kappa_{n+1})_+^2 dx \vphantom{\int_0^\delta} \right )^{\frac{s}{p^-_{1,j}}+\frac{s-1}{N}}\\
            & \qquad \qquad \times \left ( \int_0^\delta|A_{\kappa_{n+1}}(t)|dt \right)^{1-\frac{s}{p^-_{1,j}}}
        \end{split}
    \end{align}
    with $M_{11}=M_{11}(p^+,q^+,\hat{C},L)$. Applying (\ref{III_3}), (\ref{II_1}), (\ref{I_1}) and \eqref{II_4} to the right-hand side of (\ref{V_3}) yields
    \begin{align}\label{V_3b}
        \begin{split}
            & \int_0^\delta \int_{\Gamma} ((u-\kappa_{n+1})_+ \xi_j)^{r}d\sigma dt \\
            & \leq M_{11} \left ( M_5 M_4^n (Z_n + \tilde{Z}_n)+2^{q_1^+(n+2)}Z_n \right)^{\frac{s}{p^-_{1,j}}+\frac{s-1}{N}}\\
            & \qquad \qquad \times \left ( \frac{2^{q_1^+(n+2)}}{\kappa^{q_1^-}} Z_n\right)^{1-\frac{s}{p^-_{1,j}}}\\
            & \leq M_{12}M_{13}^n (Y_n+Y_n^{2q^+})\left ( \frac{2^{q_1^+(n+2)}}{\kappa^{q_1^-}} Z_n\right)^{1-\frac{s}{p^-_{1,j}}},
        \end{split}
    \end{align}
    where
    \begin{align*}
	\frac{s}{p^-_{1,j}}+\frac{s-1}{N} \leq 2 q^+.
    \end{align*}
    Now, putting $\tilde{\eta}=\max\left(\frac{s_{1,1}(q^+_{2,1,1})}{p^-_{1,1}}, \ldots, \frac{s_{1,m}(q^+_{2,1,m})}{p^-_{1,m}} \right)$ we obtain for the last term in \eqref{V_3b}
    \begin{align}\label{V_4}
        \begin{split}
            \left(\frac{2^{q_1^+(n+2)}}{\kappa^{q_1^-}} Z_n \right)^{1-\frac{s}{p^-_{1,j}}}
            & \leq 2^{q_1^+(n+2)} \left (\frac{1}{\kappa^{q_1^-}}\right)^{1-\tilde{\eta}} \left(Y_n+Y_n^{1-\tilde{\eta}} \right).
        \end{split}
    \end{align}
    Finally, combining (\ref{V_3b}) and (\ref{V_4}) results in
    \begin{align}\label{V_5}
        \begin{split}
            & \int_0^\delta \int_{\Gamma} ((u-\kappa_{n+1})_+ \xi_j)^{r}d\sigma dt \\
            & \leq M_{12}M_{13}^n (Y_n+Y_n^{2q^+})2^{q_1^+(n+2)} \left (\frac{1}{\kappa^{q_1^-}}\right)^{1-\tilde{\eta}} \left(Y_n+Y_n^{1-\tilde{\eta}} \right)\\
            & \leq M_{14}M_{15}^n \frac{1}{\kappa^{q_1^-(1-\tilde{\eta})}} \left(Y_n^2+Y_n^{2-\tilde{\eta}}+Y_n^{2q^++1}+Y_n^{2q^++1-\tilde{\eta}} \right).
        \end{split}
    \end{align}
    From (\ref{V_1}) and (\ref{V_5}) we conclude for $r \in \{q_{2,1,j}^-,q_{2,1,j}^+\}$
    \begin{align}\label{V_6}
        \begin{split}
            \tilde{Z}_{n+1} \leq M_{16}M_{15}^n \frac{1}{\kappa^{q_1^-(1-\tilde{\eta})}} \left(Y_n^2+Y_n^{2-\tilde{\eta}}+Y_n^{2q^++1}+Y_n^{2q^++1-\tilde{\eta}} \right).
        \end{split}
    \end{align}

%***********************************************************************************************************************************
%******************Step 6*****************************************************************************************************************
%***********************************************************************************************************************************

  {\bf (VI) The iterative inequality for $Y_{n}$.}
    Since $Y_n=Z_n+\tilde{Z}_n$, we derive from (\ref{IV_5})  and (\ref{V_6})
    \begin{align*}
        \begin{split}
            Y_{n+1}
            & \leq K b^n \frac{1}{\kappa^{q_1^- (1-\hat{\eta})}} \left (Y_n^2+Y_n^{2-\eta}+Y_n^{1+q^+ }+Y_n^{1+q^+ -\eta} \right.\\
            & \qquad \qquad \qquad \qquad \left. +Y_n^2+Y_n^{2-\tilde{\eta}}+Y_n^{2q^++1}+Y_n^{2q^++1-\tilde{\eta}}\right )\\
            & \leq 8K b^n \frac{1}{\kappa^{q_1^- (1-\hat{\eta})}}\big(Y_n^{1+\delta_1}+Y_n^{1+\delta_2}\big)
        \end{split}
    \end{align*}
    with $K=\max(M_{9},M_{16}),\,b=\max(M_{10},M_{15})$, $\hat{\eta}=\max(\eta,\tilde{\eta})$ and where $0<\delta_1\le \delta_2$ are given by
    \begin{align*}
        & \delta_1= \min\left(1,1-\eta,q^+,q^+ -\eta,1-\tilde{\eta},2q^+,2q^+-\tilde{\eta}\right),\\
        & \delta_2= \max\left(1,1-\eta,q^+,q^+ -\eta,1-\tilde{\eta},2q^+,2q^+-\tilde{\eta}\right).
    \end{align*}
    We can assume, without loss of generality, that $b>1$. Hence, we may apply Lemma \ref{lemma_geometric_convergence} which ensures that $Y_n \to 0$ as $n\to \infty$ provided
    \begin{align}\label{VI_1}
        \begin{split}
            Y_0
            & =\int_0^\delta \into (u-\kappa)_+^{q_1(t,x)} dxdt +\int_0^\delta \int_{\Gamma} (u-\kappa)_+^{q_2(t,x)} d \sigma dt\\
            & \leq \min \left[\left (\frac{16 K}{\kappa^{q_1^- (1-\hat{\eta})}} \right )^{-\frac{1}{\delta_1}} b^{-\frac{1}{\delta_1^2}}, \left (\frac{16 K}{\kappa^{q_1^- (1-\hat{\eta})}} \right )^{-\frac{1}{\delta_2}}b^{-\frac{1}{\delta_1 \delta_2}-\frac{\delta_2-\delta_1}{\delta_2^2}}\right].
        \end{split}
    \end{align}   
% old
%     \begin{align}\label{VI_1}
%         \begin{split}
%             Y_0
%             & =\int_0^\delta \into (u-\kappa)_+^{q_1(t,x)} dxdt +\int_0^\delta \int_{\Gamma} (u-\kappa)_+^{q_2(t,x)} d \sigma dt\\
%             & \leq \left (\frac{16 K}{\kappa^{q_1^- (1-\hat{\eta})}} \right )^{-\frac{1}{\delta_1}} b^{-\frac{1}{\delta_1^2}}.
%         \end{split}
%     \end{align}
    If we have
% old
%     \begin{align}\label{VI_2}
%         \int_0^\delta \into u_+^{q_1(t,x)} dxdt + \int_0^\delta \int_{\Gamma} u_+^{q_2(t,x)} d \sigma dt \leq
%         \left (\frac{16 K}{\kappa^{q_1^- (1-\hat{\eta})}} \right )^{-\frac{1}{\delta_1}} b^{-\frac{1}{\delta_1^2}},
%     \end{align}
    \begin{align}\label{VI_2}
	\begin{split}
	    &\int_0^\delta \into u_+^{q_1(t,x)} dxdt + \int_0^\delta \int_{\Gamma} u_+^{q_2(t,x)} d \sigma dt\\
	    &\leq\min \left[\left (\frac{16 K}{\kappa^{q_1^- (1-\hat{\eta})}} \right )^{-\frac{1}{\delta_1}} b^{-\frac{1}{\delta_1^2}}, \left (\frac{16 K}{\kappa^{q_1^- (1-\hat{\eta})}} \right )^{-\frac{1}{\delta_2}}b^{-\frac{1}{\delta_1 \delta_2}-\frac{\delta_2-\delta_1}{\delta_2^2}}\right],
	\end{split}
    \end{align}
    then (\ref{VI_1}) is obviously satisfied. Thus, choosing $\kappa$ such that
% old
%     \begin{align}\label{VI_3}
% 	\begin{split}
% 	    \kappa
% 	    & =\max\left(\max(1,\esssup_{\Omega}u_0), \vphantom{\left[ \int_0^\delta\int_{\Gamma} u_+^{q_2(t,x)} d \sigma dt\right]^{\frac{\delta_1}{q^-_1(1-\hat{\eta})}}} \right.\\
% 	    & \left. \left [ (16K)^{\frac{1}{\delta_1}} b^{\frac{1}{\delta_1^2}}\left (\int_0^\delta\into u_+^{q_1(t,x)} dxdt + \int_0^\delta\int_{\Gamma} u_+^{q_2(t,x)} d \sigma dt\right )\right ]^{\frac{\delta_1}{q^-_1(1-\hat{\eta})}}\right),
% 	\end{split}
%     \end{align}
    \begin{align}\label{VI_3}
	\begin{split}
	    \kappa
	    & =\max\left(\max(1,\esssup_{\Omega}u_0), \vphantom{\left[ \int_0^\delta\int_{\Gamma} u_+^{q_2(t,x)} d \sigma dt\right]^{\frac{\delta_1}{q^-_1(1-\hat{\eta})}}} 
	    (16K)^{\frac{1}{q_1^- (1-\hat{\eta})}} b^{\frac{1}{\delta_1 q_1^- (1-\hat{\eta})}+\frac{\delta_2-\delta_1}{\delta_2 q_1^- (1-\hat{\eta})}}
	    \phantom{\int_0^\delta}\right.\\
	    & \qquad\qquad\left.  \times \left (1+\int_0^\delta\into u_+^{q_1(t,x)} dxdt + \int_0^\delta\int_{\Gamma} u_+^{q_2(t,x)} d \sigma dt\right )^{\frac{\delta_2}{q^-_1(1-\hat{\eta})}}\right),
	\end{split}
    \end{align}
    it follows that (\ref{VI_2}) and in particular (\ref{VI_1}) are fulfilled. Since $\kappa_n \to 2\kappa$ as $n \to \infty$ we obtain
    \begin{align*}%\label{VI_4}
        & \esssup_{(0,\delta) \times \Om} u \leq 2\kappa \quad \text{and} \quad \esssup_{(0,\delta) \times \Gamma} u \leq 2\kappa,
    \end{align*}
    where $\kappa$ is defined in (\ref{VI_3}). That means that $u \in L^{\infty}(Q_\delta), L^\infty (\Gamma_\delta)$ with $Q_\delta=(0,\delta)\times \Omega$ as well as $\Gamma_\delta=(0,\delta)\times \Gamma$.

%***********************************************************************************************************************************
%******************Step 7*****************************************************************************************************************
%***********************************************************************************************************************************

    {\bf (VII) Repeating the iteration.}
    Note that the subsequent constants are independent of $\delta$:
    \begin{align*}
	C_1:=\esssup_{\Omega}u_0, \ 
	C:=(16K)^{\frac{1}{q_1^- (1-\hat{\eta})}} b^{\frac{1}{\delta_1 q_1^- (1-\hat{\eta})}+\frac{\delta_2-\delta_1}{\delta_2 q_1^- (1-\hat{\eta})}}, \
	\beta:=\frac{\delta_2}{q^-_1(1-\hat{\eta})}.
    \end{align*}
    Thus, step (VI) has shown that
    \begin{align*}
	\begin{split}
	   & \max\left( \esssup_{(0,\delta) \times \Omega } u,\esssup_{(0,\delta) \times \Gamma} u\right)\\
	    & \leq 2 \max\left(C_1,C\left (1+\int_0^\delta\into u_+^{q_1(t,x)} dxdt + \int_0^\delta\int_{\Gamma} u_+^{q_2(t,x)} d \sigma dt\right )^{\beta}\right)\\
	    & \leq 2 \max\left(C_1,C\left (1+\int_0^T\into u_+^{q_1(t,x)} dxdt + \int_0^T\int_{\Gamma} u_+^{q_2(t,x)} d \sigma dt\right )^{\beta}\right)\\
	    & =:\tilde{\kappa}_1,
	\end{split}
    \end{align*}
    where $\tilde{\kappa}_1$ is independent of $\delta$. Now we may proceed as in (II)--(VI) replacing $\delta$ by $2\delta$ and starting with $\kappa \geq \tilde{\kappa}_1$. Then, the same calculations as above ensure an estimate of the form
    \begin{align*}
	\begin{split}
	    & \max\left( \esssup_{(0,2\delta) \times \Omega } u,\esssup_{(0,2\delta) \times \Gamma} u\right)\\
	    & \leq 2 \max\left(\tilde{\kappa}_1,C\left (1+\int_0^{2\delta}\into u_+^{q_1(t,x)} dxdt + \int_0^{2\delta}\int_{\Gamma} u_+^{q_2(t,x)} d \sigma dt\right )^{\beta}\right)\\
	    & \leq 2 \max\left(\tilde{\kappa}_1,C\left (1+\int_0^T\into u_+^{q_1(t,x)} dxdt + \int_0^T\int_{\Gamma} u_+^{q_2(t,x)} d \sigma dt\right )^{\beta}\right)\\
	    & =2 \tilde{\kappa}_1 =:\tilde{\kappa}_2.
	\end{split}
    \end{align*}
    Recalling $[0,T]=\bigcup_{i=1}^l [\delta(i-1),\delta i]$ and following this pattern gives the global upper bound
    \begin{align*}
	\begin{split}
	    & \max\left( \esssup_{(0,T) \times \Omega } u,\esssup_{(0,T) \times \Gamma} u\right) \leq \tilde{\kappa}_l=2\tilde{\kappa}_{l-1}= \ldots = 2^{l-1} \tilde{\kappa}_1
	\end{split}
    \end{align*}
    meaning that
    \begin{align*}
	\begin{split}
	    & \max\left( \esssup_{(0,T) \times \Omega } u,\esssup_{(0,T) \times \Gamma} u\right)\\
	    & \leq 2^l \max\left(C_1,C\left (1+\int_0^T\into u_+^{q_1(t,x)} dxdt + \int_0^T\int_{\Gamma} u_+^{q_2(t,x)} d \sigma dt\right )^{\beta}\right).
	\end{split}
    \end{align*}
    This proves the first assertion of the theorem.

    In order to verify the global lower bound for a supersolution, we may argue similarly replacing $u$ by $-u$, $A_\kappa(t)$ by $\tilde{A}_\kappa(t)$ and $\Gamma_\kappa(t)$ by $\tilde{\Gamma}_\kappa(t)$. Additionally, instead of Proposition \ref{proposition_energy_subsolution}, we have to use Proposition \ref{proposition_energy_supersolution}. That finishes the proof of the theorem.
\end{proof}

\end{document}